\DeclareMathAlphabet{\pazocal}{OMS}{zplm}{m}{n}
\newtheorem{assumption}{Assumption}
\newtheorem{theorem}{Theorem}
\newtheorem{lemma}{Lemma}
\newtheorem{proposition}{Proposition}
\newtheorem{remark}{Remark}
\title{\LARGE \bf
Joint~Parameter~and~State~Estimation~of~Noisy~Discrete-Time~Nonlinear~Systems:~A~Supervisory~Multi-Observer~Approach
}
\author{T.J. Meijer$\,^{1,\dagger}$, V.S. Dolk$\,^2$, M.S. Chong$\,^1$, R. Postoyan$\,^3$, B. de Jager$\,^1$, D. Nešić$\,^4$ and W.P.M.H. Heemels$\,^1$
\thanks{$^1\,$Tomas Meijer, Michelle Chong, Bram de Jager and Maurice Heemels are with the Department of Mechanical Engineering, Eindhoven University of Technology, P.O. Box 513, 5600 MB Eindhoven, The Netherlands. {\tt\small \{t.j.meijer; m.s.t.chong; a.g.de.jager; m.heemels\}@tue.nl}}%
\thanks{$^2\,$Victor Dolk is with ASML, De Run 6665, 5504 DT Veldhoven, The Netherlands. {\tt\small victor.dolk@asml.com}}%
\thanks{$^3\,$Romain Postoyan is with the Université de Lorraine, CNRS, CRAN, F-54000 Nancy, France. {\tt\small romain.postoyan@univ-lorraine.fr}}%
\thanks{$^4\,$Dragan Nešić is with the Department of Electrical and Electronic Engineering, the University of Melbourne, Parkville, VIC 3010, Australia. {\tt\small dnesic@unimelb.edu.au}}
\thanks{$^\dagger$ Corresponding author: T.J. Meijer.}}
\date{\today}
\begin{document}

\maketitle
\thispagestyle{empty}
\pagestyle{empty}

\begin{abstract}
	This paper presents two schemes to jointly estimate parameters and states of discrete-time nonlinear systems in the presence of bounded disturbances and noise. The parameters are assumed to belong to a known compact set. Both schemes are based on sampling the parameter space and designing a state observer for each sample. A supervisor selects one of these observers at each time instant to produce the parameter and state estimates. In the first scheme, the parameter and state estimates are guaranteed to converge within a certain margin of their true values in finite time, assuming that a sufficiently large number of observers is used and a persistence of excitation condition is satisfied in addition to other observer design conditions. This convergence margin is constituted by a part that can be chosen arbitrarily small by the user and a part that is determined by the noise levels. The second scheme exploits the convergence properties of the parameter estimate to perform subsequent zoom-ins on the parameter subspace to achieve stricter margins for a given number of observers. The strengths of both schemes are demonstrated using a numerical example.
\end{abstract}

\section{Introduction}\label{sec:introduction}
\PARstart{J}{oint} parameter and state estimation is a highly relevant problem in many applications, such as synchronization of digital twins with their physical counterparts, see, e.g.,~\cite{Rasheed2020}, and sensor or source localization (in distributed parameter systems), see, e.g.,~\cite{Boerlage2008,Atanasov2014,Sawo2009}. In many cases such combined estimation problems arise, even when the aim is to estimate only the parameters of a system, as a result of the full state either not being measurable and/or measurements being corrupted by noise. A common approach to the joint parameter and state estimation problem is to augment the state with the parameters (and add constant parameter dynamics) and formulate it as a state estimation problem~\cite{Sarrka2013}. The state of the resulting system is then estimated using nonlinear state estimation algorithms, such as nonlinear Kalman filters or particle filters~\cite{Sarrka2013}, however, in general the underlying structure of the original model is lost leading to a (highly) nonlinear state estimation problem. For example, the augmented state approach turns joint estimation of an uncertain linear system with affine parameter dependencies into a bilinear state estimation problem. Following this path, it is typically difficult to provide convergence results~\cite{Besancon2006}. Joint parameter and state estimation schemes that do provide analytical convergence results often apply only to specific classes of systems, see, e.g.,~\cite{Besancon2006,Farza2009,Alvaro-Mendoza2020}.

\begin{figure}[!t]
	\centering
	\includegraphics[width=.41\textwidth]{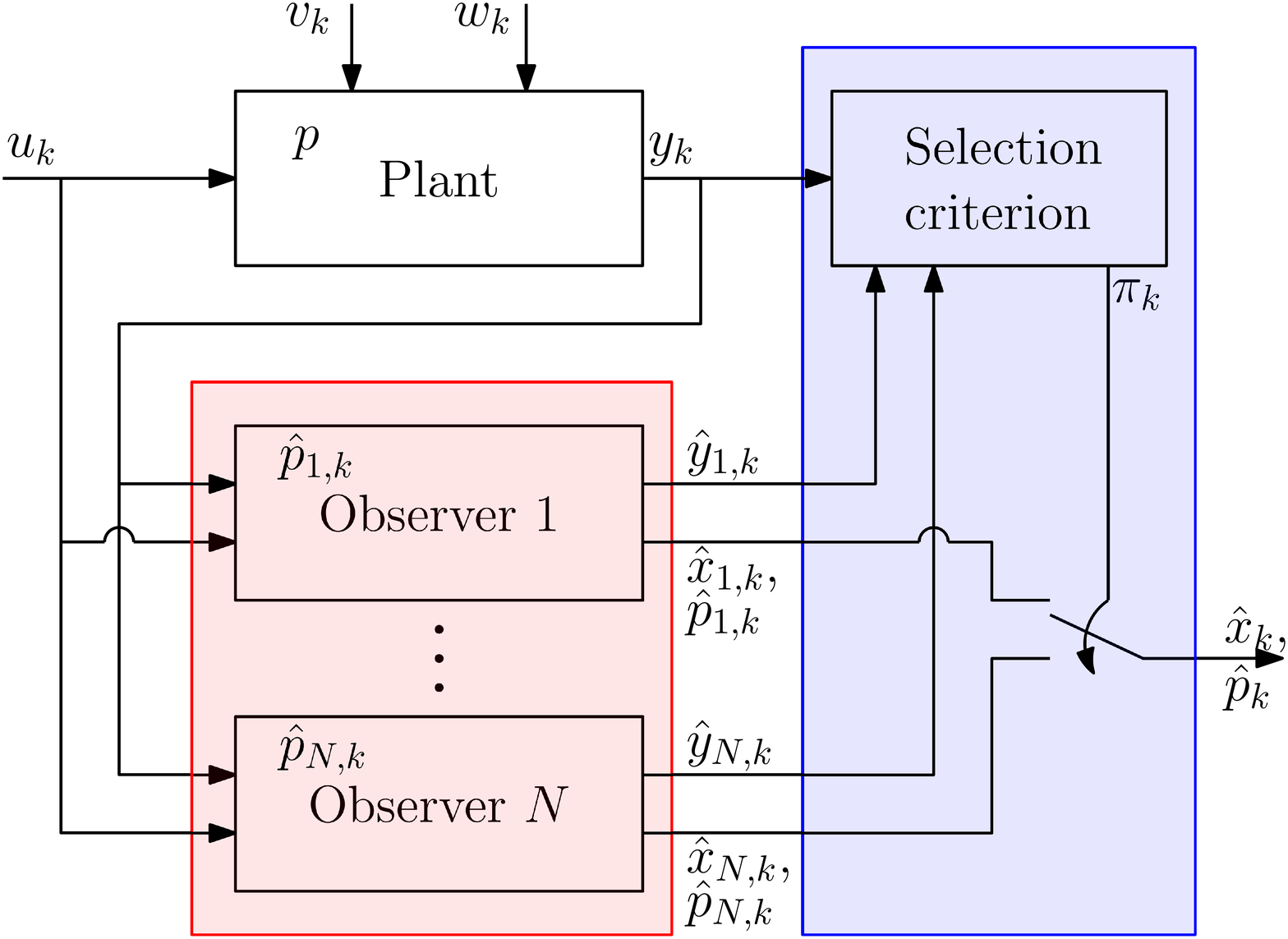}
	\caption{Supervisory observer scheme consisting of the~(\protect\tikz\protect\filldraw[fill=red!10!,draw=red] (0,0) rectangle (0.15,0.15);)~multi-observer and (\protect\tikz\protect\filldraw[fill=blue!10!,draw=blue] (0,0) rectangle (0.15,0.15);) supervisor. Relevant notation is introduced in Section~\ref{sec:static-sampling}.}
	\label{fig:multi-observer}
\end{figure}

In this paper, the joint parameter and state estimation problem for discrete-time nonlinear systems in the presence of bounded process and measurement noise is addressed in a different way. We exploit a supervisory observer framework that was recently developed in~\cite{Chong2015} for \textit{continuous-time} systems and \textit{without the consideration of disturbances and noise}. It is assumed that the parameters are constant and belong to a known compact set, with no restriction on its "size". The so-called \textit{supervisory observer} scheme, as depicted in Fig.~\ref{fig:multi-observer}, consists of \begin{enumerate*}[label=(\alph*)]
	\item the \textit{multi-observer}, a bank of multiple state observers--each designed for a parameter value sampled from the known parameter set--and
	\item the \textit{supervisor}, which at any given time instant selects one of the observers to provide the state and parameter estimates.
\end{enumerate*} Such multi-observer schemes have also been proved useful for many other purposes, such as, safeguarding systems against sensor attacks, see, e.g.,~\cite{Chong2020}, and the context of adaptive control, see, e.g.,~\cite{Hespanha2001}. An advantage of this sampling-based approach compared to the augmented state-space approach is that, for each parameter sample, the structure of the underlying system is preserved. This fact allows us to employ observers tailored to the specific model structure, which come with certain convergence guarantees and convenient (LMI-based) synthesis procedures, see, e.g.,~\cite{Heemels2010} for LPV systems or~\cite{Ibrir2007} for a class of nonlinear systems. The convergence properties of the individual state observers in the multi-observer are combined with a persistence of excitation (PE) condition to arrive at convergence guarantees for the supervisory observer. To be more concrete, the parameter and state estimates are guaranteed to converge within a certain margin of their true values, given that a sufficiently large number of observers is used. This sampling-based approach, which uses a static sampling policy, is rather simple to implement, but the number of samples (and, hence, the number of observers running in parallel) required to guarantee that the parameter error converges to within a given margin grows exponentially with the dimension of the parameter space. This inspired the development of a second scheme, which exploits the convergence result to iteratively zoom in by resampling from a shrinking subspace of the original parameter space. The resulting dynamic sampling policy is able to, for a given number of observers, guarantee tighter bounds on the parameter and state estimates. Alternatively, the dynamic scheme can be used to achieve a given margin of convergence using fewer observers than the static scheme. 

The extension of the continuous-time results in~\cite{Chong2015} to discrete-time is motivated by the fact that real-time implementation of any estimation algorithm requires discretization and that measurements become available at discrete time instances. Additionally, the discrete-time formulation enables parameter and state estimation of systems in feedback interconnection with a discrete-time control architecture such as model predictive control. The inclusion of process and measurement noise in the supervisory multi-observer framework is another major contribution, which allows us to provide more realistic performance guarantees for the proposed estimator, that was not addressed in~\cite{Chong2015,Chong2017}. However, it poses additional technical challenges including distinguishing between the effects of noise and parameter errors on our state and output estimation errors. In fact, this is only possible to some extent and, unlike in the noiseless case, the parameter error cannot be made arbitrarily small by using sufficiently many observers. Moreover, the dynamic sampling policy has to take into account the noise levels when zooming in, requiring a careful analysis. The strength of our framework is demonstrated on a numerical case study in the presence of noise.

The content of the paper is organized as follows. The problem definition is given in Section~\ref{sec:problem-definition}. Section~\ref{sec:static-sampling} presents the discrete-time supervisory observer using a static sampling policy. In Section~\ref{sec:dynamic-sampling}, the supervisory observer is adapted to utilize dynamic sampling. Finally, a numerical case study and
conclusions are given in Sections~\ref{sec:case-study} and~\ref{sec:conclusions}. All proofs can be found in the Appendix\\

\noindent\textbf{Notation.} Let $\mathbb{R}=\left(-\infty,\infty\right)$, $\mathbb{R}_{\geqslant 0} = \left[0,\infty\right)$, $\mathbb{R}_{>0}=\left(0,\infty\right)$, $\mathbb{N}=\left\{0,1,\hdots\right\}$, $\mathbb{N}_{\left[n,m\right]}=\left\{n,n+1,\hdots,m\right\}\subset\mathbb{N}$ for $n,m\in\mathbb{N}$ and $\mathbb{N}_{\geqslant n}=\left\{n,n+1,\hdots\right\}\subseteq\mathbb{N}$ for $n\in\mathbb{N}$. Moreover, $\left\|\cdot\right\|_p\,:\,\mathbb{R}^n\rightarrow\mathbb{R}_{\geqslant 0}$ with $n\in\mathbb{N}_{\geqslant 1}$ denotes an arbitrary (but the same throughout the paper) $p$-norm on $\mathbb{R}^n$, and we omit the subscript $p$ in the following, i.e., $\left\|\cdot\right\|=\left\|\cdot\right\|_p$. Let $\mathbb{B}^n\left(\xi,r\right)\coloneqq\left\{x\in\mathbb{R}^{n}\middle|\left\|x-\xi\right\|\leqslant r\right\}$ for $n\in\mathbb{N}_{\geqslant 1}$ represent the ball centered at $\xi\in\mathbb{R}^{n}$ of ``radius'' $r\in\mathbb{R}_{\geqslant 0}$ and let $\mathbb{B}_r^n=\mathbb{B}^n\left(0,r\right)$ denote such a set centered at the origin. For a sequence $\left\{x_k\right\}_{k\in\mathbb{N}}$ with $x_k\in\mathbb{R}^{n}$ and $n\in\mathbb{N}$, we denote $\left\|\left\{x_k\right\}\right\| = \left\|\left\{x_k\right\}_{k\in\mathbb{N}}\right\|_p\coloneqq \sup_{k\in\mathbb{N}}\left\|x_k\right\|_p$ where the subscript $p$ is again omitted for the sake of compactness. The space of all bounded sequences taking values in $\mathbb{R}^n$ with $n\in\mathbb{N}$ is denoted $\ell^{\infty}\coloneqq\{\{x_k\}_{k\in\mathbb{N}}\,|\,\|\{x_k\}\|_\infty <\infty\}$. The notation $\left(u,v\right)$ stands for $[u^\top\, v^\top]^\top$, where $u\in\mathbb{R}^m$ and $v\in\mathbb{R}^{n}$ with $\left(n,m\right)\in\mathbb{N}^2$. A continuous function $\alpha\,:\,\mathbb{R}_{\geqslant 0}\rightarrow\mathbb{R}_{\geqslant 0}$ is a $\pazocal{K}$-function ($\alpha\in\pazocal{K}$) if it is strictly increasing and $\alpha\left(0\right)=0$. If, in addition, $\alpha\left(r\right)\rightarrow\infty$ as $r\rightarrow\infty$, then $\alpha$ is a $\pazocal{K}_\infty$-function ($\alpha\in\pazocal{K}_{\infty}$). A continuous function $\beta\,:\,\mathbb{R}_{\geqslant 0}\times\mathbb{R}_{\geqslant 0}\rightarrow\mathbb{R}_{\geqslant 0}$ is a $\pazocal{KL}$-function ($\beta\in\pazocal{KL}$) if $\beta\left(\cdot,s\right)\in\pazocal{K}$ for each $s\in\mathbb{R}_{\geqslant 0}$, $\beta\left(r,\cdot\right)$ is non-increasing and $\beta\left(r,s\right)\rightarrow 0$ as $s\rightarrow\infty$ for each $r\in\mathbb{R}_{\geqslant 0}$.

\section{Problem definition}\label{sec:problem-definition}
Consider the discrete-time system given by
\begin{subequations}
	\begin{alignat}{2}
		x_{k+1} &= f\left(x_k,p,u_k,v_k\right),\\
		y_k &= h\left(x_k,p,u_k,w_k\right),
	\end{alignat}
	\label{eq:system}%
\end{subequations}
where $x_k\in\mathbb{R}^{n_x}$, $u_k\in\mathbb{R}^{n_u}$ and $y_k\in\mathbb{R}^{n_y}$ denote the state, input and output, respectively, at time instant $k\in\mathbb{N}$. In addition, the following assumptions are adopted.
\begin{assumption}
	\label{ass:bounded-sequences}
	The input $u_k$, process noise $v_k$ and measurement noise $w_k$ in~\eqref{eq:system} are bounded, i.e., there exist constants $\Delta_u,\Delta_v,\Delta_w\in\mathbb{R}_{\geqslant 0}$ such that for all $k\in\mathbb{N}$
	\begin{equation}
		u_k\in\mathbb{B}_{\Delta_u}^{n_u},\quad v_k\in\mathbb{B}_{\Delta_v}^{n_v}\quad\text{and}\quad w_k\in\mathbb{B}_{\Delta_w}^{n_w}.
	\end{equation}
\end{assumption}
\begin{assumption}
	\label{ass:compact-parameter-set}
		The parameter vector $p$ is constant and unknown and it belongs to a given compact set $\mathbb{P}$, i.e., $p\in\mathbb{P}\subset\mathbb{R}^{n_p}$.
\end{assumption}
\noindent Assumption~\ref{ass:bounded-sequences} means that $\left\{u_k\right\}_{k\in\mathbb{N}},\left\{v_k\right\}_{k\in\mathbb{N}},\left\{w_k\right\}_{k\in\mathbb{N}}\in\ell^{\infty}$, which is a reasonable assumption in practice. It should be noted that $\Delta_v$ and $\Delta_w$ in Assumption~\ref{ass:bounded-sequences} do not need to be known to implement the estimation schemes, their existence alone is sufficient. The input $u_k$ and output $y_k$ are known/measured, while the full state $x_k$, process noise $v_k$ and measurement noise $w_k$ are unknown. Moreover, the functions $f\,:\,\mathbb{R}^{n_x}\times\mathbb{P}\times\mathbb{R}^{n_u}\times\mathbb{R}^{n_v}\rightarrow\mathbb{R}^{n_x}$ and $h\,:\,\mathbb{R}^{n_x}\times\mathbb{P}\times\mathbb{R}^{n_u}\times\mathbb{R}^{n_w}\rightarrow\mathbb{R}^{n_y}$ are given and $h$ is locally Lipschitz continuous. For any initial condition $x_0\in\mathbb{R}^{n_x}$, input sequence $\left\{u_k\right\}_{k\in\mathbb{N}}$ with $u_k\in\mathbb{R}^{n_u}$, process noise sequence $\left\{v_k\right\}_{k\in\mathbb{N}}$ with $v_k\in\mathbb{R}^{n_v}$, measurement noise sequence $\left\{w_k\right\}_{k\in\mathbb{N}}$ with $w_k\in\mathbb{R}^{n_w}$ for $k\in\mathbb{N}$ and parameters $p\in\mathbb{P}$, the system~\eqref{eq:system} admits a unique solution	defined for all $k\in\mathbb{N}$. Finally, the following assumption is adopted.
\begin{assumption}\label{ass:bounded-state}
	The solutions to~\eqref{eq:system} are uniformly bounded, i.e., for all $\Delta_x,\Delta_u,\Delta_v,\Delta_w\in\mathbb{R}_{\geqslant 0}$, there exists a constant $K_x=K_x\left(\Delta_x,\Delta_u,\Delta_v,\Delta_w\right)\in\mathbb{R}_{>0}$ such that for all $x_0\in\mathbb{B}_{\Delta_x}^{n_x}$, $\left\{u_k\right\}_{k\in\mathbb{N}}$ with $u_k\in\mathbb{B}_{\Delta_u}^{n_u}$, $\left\{v_k\right\}_{k\in\mathbb{N}}$ with $v_k\in\mathbb{B}_{\Delta_v}^{n_v}$ and $\left\{w_k\right\}_{k\in\mathbb{N}}$ with $w_k\in\mathbb{B}_{\Delta_w}^{n_w}$ for any $k\in\mathbb{N}$, it holds that $x_k\in\mathbb{B}_{K_x}^{n_x}$ for all $k\in\mathbb{N}$.
\end{assumption}
\noindent The bound $K_x$ in Assumption~\ref{ass:bounded-state} does not need to be known to implement the proposed estimation algorithms, only its existence has to be ensured. 

Our objective is to jointly estimate the parameter vector $p$ and the state of the system~\eqref{eq:system} (within certain margins) subject to bounded process noise $v_k$ and measurement noise $w_k$, given the input $u_k$ and the measured output $y_k$.

\section{Supervisory observer: static sampling policy}\label{sec:static-sampling}
The parameter and state estimation schemes presented in this paper consist of two subsystems, as shown in Fig.~\ref{fig:multi-observer}. The first subsystem is the so-called multi-observer, which is a collection of observers that operate in parallel, where each observer is designed for a different parameter vector sampled from the parameter space. The second subsystem is a supervisor. The outputs of the observers are fed to the supervisor, which selects one of the observers based on a selection criterion and outputs its state estimate and corresponding parameter sample as the estimates produced by the overall estimation scheme. In this section, the parameter samples are obtained using a \textit{static} sampling policy meaning that these samples are fixed for all times. Later, in Section~\ref{sec:dynamic-sampling}, we consider a \textit{dynamic} sampling policy, which aims to reduce the computational complexity of the estimation scheme.

\subsection{Multi-observer}
The parameter space $\mathbb{P}$ is sampled to produce $N$ parameter samples $\hat{p}_i\in\mathbb{P}$ for $i\in\pazocal{N}\coloneqq\mathbb{N}_{\left[1,N\right]}$. This sampling is performed in such a way that the maximum distance of the true parameter to the nearest sample tends to zero as $N$ tends to infinity, i.e.,
\begin{equation}
	\lim_{N\rightarrow\infty} \max_{p\in\mathbb{P}} \min_{i\in\pazocal{N}} \left\|\hat{p}_i-p\right\| = 0.
	\label{eq:sampling-scheme}
\end{equation}
This can be ensured, for instance, by employing a uniform sampling of the parameter space. For each $\hat{p}_i$, $i\in\pazocal{N}$, a state observer is designed, given by
\begin{subequations}
	\begin{alignat}{2}
		\hat{x}_{i,k+1} &= \hat{f}\left(\hat{x}_{i,k},\hat{p}_i,u_k,y_k\right),\\
		\hat{y}_{i,k} &= h\left(\hat{x}_{i,k},\hat{p}_i,u_k,0\right),
	\end{alignat}
	\label{eq:multi-observer}%
\end{subequations}
where $\hat{x}_{i,k}\in\mathbb{R}^{n_x}$ and $\hat{y}_{i,k}\in\mathbb{R}^{n_y}$ denote, respectively, the state and output estimate of the $i$-th observer at time $k\in\mathbb{N}$. The function $\hat{f}\,:\,\mathbb{R}^{n_x}\times\left\{\hat{p}_i\right\}_{i\in\pazocal{N}}\times\mathbb{R}^{n_u}\times\mathbb{R}^{n_y}\rightarrow\mathbb{R}^{n_x}$ is well-designed such that the solutions to~\eqref{eq:multi-observer} are  defined for all time $k\in\mathbb{N}$, any initial condition $\hat{x}_{i,0}\in\mathbb{R}^{n_x}$, input sequence $\left\{u_k\right\}_{k\in\mathbb{N}}$, output sequence $\left\{y_k\right\}_{k\in\mathbb{N}}$ and parameter sample $\hat{p}_i\in\mathbb{P}$, $i\in\pazocal{N}$.

Let $\tilde{x}_{i,k}\coloneqq\hat{x}_{i,k}-x_k$ denote the state estimation error, $\tilde{y}_{i,k}\coloneqq\hat{y}_{i,k}-y_k$ the output estimation error and $\tilde{p}_i\coloneqq \hat{p}_i-p$ the parameter estimation error of the $i$-th observer. Since $\mathbb{P}$ is compact, there exists a compact set $\mathbb{D}\subset\mathbb{R}^{n_p}$ such that $\tilde{p}_i\in\mathbb{D}$ for any $p,\hat{p}_i\in\mathbb{P}$. The state and output estimation errors are governed by
\begin{subequations}
	\begin{alignat}{2}
		\tilde{x}_{i,k+1} &= F\left(\tilde{x}_{i,k},x_k,\tilde{p}_i,p,u_k,v_k,w_k\right),\\
		\tilde{y}_{i,k} &= H\left(\tilde{x}_{i,k},x_k,\tilde{p}_i,p,u_k,w_k\right),\label{eq:error-system-output}%
	\end{alignat}
	\label{eq:error-system}%
\end{subequations}
where the functions $F\,:\,\mathbb{R}^{n_x}\times\mathbb{R}^{n_x}\times\mathbb{D}\times\mathbb{P}\times\mathbb{R}^{n_u}\times\mathbb{R}^{n_v}\times\mathbb{R}^{n_w}\rightarrow\mathbb{R}^{n_x}$ and $H\,:\,\mathbb{R}^{n_x}\times\mathbb{R}^{n_x}\times\mathbb{D}\times\mathbb{P}\times\mathbb{R}^{n_u}\times\mathbb{R}^{n_w}\rightarrow\mathbb{R}^{n_y}$ are given by $F\left(\tilde{x},x,\tilde{p},p,u,v,w\right) = \hat{f}\left(\tilde{x}+x,\tilde{p}+p,u,h\left(x,p,u,w\right)\right)-f\left(x,p,u,v\right)$ and $H\left(\tilde{x},x,\tilde{p},p,u,w\right) = h\left(\tilde{x}+x,\tilde{p}+p,u,0\right)- h\left(x,p,u,w\right)$. The observers~\eqref{eq:multi-observer} are assumed to be robust with respect to the parameter error and noise in the following sense.
\begin{assumption}\label{ass:iss-lyap}
	There exist functions $\alpha_1,\alpha_2,\alpha_3\in\pazocal{K}_\infty$ and a continuous non-negative function $\sigma\,:\,\mathbb{D}\times\mathbb{R}^{n_v}\times\mathbb{R}^{n_w}\times\mathbb{R}^{n_x}\times\mathbb{R}^{n_u}\rightarrow\mathbb{R}_{\geqslant 0}$ with $\sigma\left(0,0,0,x,u\right)=0$ for all $x\in\mathbb{R}^{n_x}$ and $u\in\mathbb{R}^{n_u}$ such that there exists a function $V\,:\,\mathbb{P}\times\mathbb{R}^{n_x}\rightarrow\mathbb{R}_{\geqslant 0}$, which satisfies, for all $\tilde{x},x\in\mathbb{R}^{n_x}$, $p,\hat{p}\in\mathbb{P}$, $u\in\mathbb{R}^{n_u}$, $v\in\mathbb{R}^{n_v}$ and $w\in\mathbb{R}^{n_w}$, that $V(\hat{p},\cdot)$ is continuous and
	\begin{subequations}
		\begin{alignat}{2}
			\alpha_1\left(\left\|\tilde{x}\right\|\right) &\leqslant V\left(\hat{p},	\tilde{x}\right) \leqslant \alpha_2\left(\left\|\tilde{x}\right\|\right),\label{eq:sandwich-bounds}\\
			V\left(\hat{p},\tilde{x}^+\right) &\leqslant V\left(\hat{p},\tilde{x}\right)-\alpha_3\left(\left\|\tilde{x}\right\|\right)+ \sigma\left(\tilde{p},v,w,x,u\right),\label{eq:diffV}%
		\end{alignat}
	\end{subequations}
	for $\tilde{x}^+ = F\left(\tilde{x},x,\tilde{p},p,u,v,w\right)$.
\end{assumption}
\noindent Assumption~\ref{ass:iss-lyap} implies that the error systems~\eqref{eq:error-system} corresponding to the observers in~\eqref{eq:multi-observer} are locally input-to-state stable (ISS) with respect to $\tilde{p}_i$, $v_k$ and $w_k$~\cite{Jiang2001}, as shown in Lemma~\ref{lem:ISS}. For linear uncertain systems, Luenberger observers satisfy Assumption~\ref{ass:iss-lyap} and, in Section~\ref{sec:case-study}, it is shown that a class of circle-criterion-based nonlinear observers also satisfies this assumption. 

\subsection{Supervisor}
At every time $k\in\mathbb{N}$, the supervisor selects one observer from the multi-observer. To be able to assess the accuracy of the different observers, the supervisor computes a monitoring signal for each observer, which, for $i\in\pazocal{N}$, is given by
\begin{equation}
	\mu_{i,k} = \sum_{j=0}^{k-1}\lambda^{k-1-j}\left\|\tilde{y}_{i,j}\right\|^2,\quad k\in\mathbb{N},
	\label{eq:monitoring-signals}%
\end{equation}
where $\lambda\in\left[0,1\right)$ is a design parameter. The $i$-th monitoring signal~\eqref{eq:monitoring-signals} can be implemented using the difference equation
\begin{equation}
	\mu_{i,k+1} = \lambda\mu_{i,k} + \left\|\tilde{y}_{i,k}\right\|^2,\quad k\in\mathbb{N},
\end{equation}
with the initial condition $\mu_{i,0}=0$. The output errors of the state observers are assumed to satisfy the following PE condition.
\begin{assumption}\label{ass:PE}
	For any $\Delta_{\tilde{x}},\Delta_x,\Delta_u,\Delta_v,\Delta_w\geqslant 0$, there exist a function $\alpha_{\tilde{y}}\in\pazocal{K}_{\infty}$ and an integer $N_{\mathrm{pe}} = N_{\mathrm{pe}}\left(\Delta_{\tilde{x}},\Delta_x,\Delta_u,\Delta_v,\Delta_w\right)\in\mathbb{N}_{\geqslant 1}$ such that for all $\tilde{p}_i\in\mathbb{D}$, $i\in\pazocal{N}$, $\tilde{x}_{i,0}\in\mathbb{B}^{n_x}_{\Delta_{\tilde{x}}}$, $p\in\mathbb{P}$, $x_0\in\mathbb{B}_{\Delta_x}^{n_x}$, $\left\{v_k\right\}_{k\in\mathbb{N}}$ with $v_k\in\mathbb{B}_{\Delta_v}^{n_v}$ and $\left\{w_k\right\}_{k\in\mathbb{N}}$ with $w_k\in\mathbb{B}_{\Delta_w}^{n_w}$ and for some input sequence $\left\{u_k\right\}_{k\in\mathbb{N}}$ with $u_k\in\mathbb{B}_{\Delta_u}^{n_u}$ for $k\in\mathbb{N}$, the solutions to~\eqref{eq:system} and~\eqref{eq:error-system} satisfy
	\begin{equation}
		\sum_{j=k-N_{\mathrm{pe}}}^{k-1}\left\|\tilde{y}_{i,j}\right\|^2\geqslant \alpha_{\tilde{y}}\left(\left\|\tilde{p}_i\right\|\right),\quad k\in\mathbb{N}_{\geqslant N_{\mathrm{pe}}}.
		\label{eq:PE}
	\end{equation}
\end{assumption}
\noindent Assumption~\ref{ass:PE} differs from the classical PE condition, see, e.g.,~\cite{Bitmead1984}, in that it considers solutions to~\eqref{eq:error-system-output} parametrized by $\tilde{p}_i$ and requires the sum in~\eqref{eq:PE} to grow with the norm of the parameter error. This ensures that the supervisor is able to infer quantitative information about the parameter estimation error of each state observer based on its monitoring signal.

At every time instant $k\in\mathbb{N}$, the supervisor selects (one of) the observer(s) with the smallest monitoring signal to obtain the estimates of $p$ and $x_k$. In the event that $\min_{i\in\pazocal{N}}\mu_{i,k}$ is not unique any observer from this subset can be chosen, resulting in a selection criterion where the index of the selected observer $\pi_k\,:\,\mathbb{N}\rightarrow\pazocal{N}$ satisfies
\begin{equation}
	\pi_k\in \arg\min_{i\in\pazocal{N}}\mu_{i,k},\quad k\in\mathbb{N}.
	\label{eq:selection-criterion}
\end{equation}
The resulting parameter estimate, state estimate and state estimation error at time $k\in\mathbb{N}$, denoted $\hat{p}_k$, $\hat{x}_k$ and $\tilde{x}_k$, respectively, are defined using $\pi_k$ as
\begin{equation}
		\hat{p}_k\coloneqq \hat{p}_{\pi_k},\quad\hat{x}_k\coloneqq \hat{x}_{\pi_k,k}\quad\text{and}\quad\tilde{x}_k\coloneqq\tilde{x}_{\pi_k,k}.
	\label{eq:estimates}%
\end{equation}

\subsection{Convergence guarantees}
The parameter and state estimates~\eqref{eq:estimates} converge to within certain margins of their true values $p$ and $x_k$ as stated in the following theorem.
\begin{theorem}
	\label{thm:static-sampling}
	Consider the system~\eqref{eq:system}, the multi-observer~\eqref{eq:multi-observer}, the monitoring signals~\eqref{eq:monitoring-signals}, the selection criterion~\eqref{eq:selection-criterion}, the parameter estimate, state estimate and state estimation error in~\eqref{eq:estimates}. Suppose Assumptions~\ref{ass:bounded-sequences}-\ref{ass:PE} hold. For any $\Delta_{\tilde{x}},\Delta_x,\Delta_u,\Delta_v,\Delta_w\in\mathbb{R}_{\geqslant 0}$ and any margins $\nu_{\tilde{p}},\nu_{\tilde{x}}\in\mathbb{R}_{>0}$, there exist functions $\upsilon_{\tilde{p}},\upsilon_{\tilde{x}},\omega_{\tilde{p}},\omega_{\tilde{x}}\in\pazocal{K}_{\infty}$, constant $K_{\tilde{x}}\in\mathbb{R}_{>0}$ and sufficiently large integers $N^{\star}\in\mathbb{N}_{\geqslant 1}$ and $M\in\mathbb{N}_{\geqslant 1}$ such that for any $N\in\mathbb{N}_{\geqslant N^{\star}}$, it holds for any $\tilde{x}_{i,0}\in\mathbb{B}_{\Delta_{\tilde{x}}}^{n_x}$, $i\in\pazocal{N}$, $p\in\mathbb{P}$, $x_0\in\mathbb{B}_{\Delta_x}^{n_x}$, $\left\{v_k\right\}_{k\in\mathbb{N}}$ with $v_k\in\mathbb{B}_{\Delta_v}^{n_v}$, $\left\{w_k\right\}_{k\in\mathbb{N}}$ with $w_k\in\mathbb{B}_{\Delta_w}^{n_w}$ and for some input sequence $\left\{u_k\right\}_{k\in\mathbb{N}}$ with $u_k\in\mathbb{B}_{\Delta_u}^{n_u}$ for $k\in\mathbb{N}$, which satisfies Assumption~\ref{ass:PE}, that $\tilde{x}_{i,k}\in\mathbb{B}_{K_{\tilde{x}}}^{n_x}$ for all $k\in\mathbb{N}$, $i\in\pazocal{N}$, and
	\begin{align}
			&\left\|\tilde{p}_{\pi_k}\right\| \leqslant \nu_{\tilde{p}} + \upsilon_{\tilde{p}}\left(\left\|\left\{v_j\right\}\right\|\right) + \omega_{\tilde{p}}\left(\left\|\left\{w_j\right\}\right\|\right),\quad \forall k\in\mathbb{N}_{\geqslant M},\nonumber\\
			&\limsup_{k\rightarrow \infty}\left\|\tilde{x}_k\right\| \leqslant \nu_{\tilde{x}} + \upsilon_{\tilde{x}}\left(\left\|\left\{v_j\right\}\right\|\right) + \omega_{\tilde{x}}\left(\left\|\left\{w_j\right\}\right\|\right).
		\label{eq:convergence-static}%
	\end{align}
\end{theorem}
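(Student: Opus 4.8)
The plan is to combine the individual robustness (ISS) property of each observer error system with the PE-based lower bound on the monitoring signals and the sampling density~\eqref{eq:sampling-scheme}, so that the selection rule~\eqref{eq:selection-criterion} forces the \emph{selected} observer to have both a small parameter error and, consequently, a small state error. Concretely, I would first invoke Lemma~\ref{lem:ISS}: Assumption~\ref{ass:iss-lyap} yields, uniformly in $i\in\pazocal{N}$, an ISS estimate
\[
	\|\tilde{x}_{i,k}\| \le \beta\big(\|\tilde{x}_{i,0}\|,k\big) + \gamma_{\tilde{p}}\big(\|\tilde{p}_i\|\big) + \gamma_v\big(\|\{v_j\}\|\big) + \gamma_w\big(\|\{w_j\}\|\big),
\]
with $\beta\in\pazocal{KL}$ and $\gamma_{\tilde{p}},\gamma_v,\gamma_w\in\pazocal{K}_\infty$ independent of $i$ (because $\alpha_1,\alpha_2,\alpha_3,\sigma$ are). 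Since $\tilde{x}_{i,0}\in\mathbb{B}^{n_x}_{\Delta_{\tilde{x}}}$, $\tilde{p}_i\in\mathbb{D}$ compact, and the noise is bounded by Assumption~\ref{ass:bounded-sequences}, taking suprema of the right-hand side gives the uniform bound $\tilde{x}_{i,k}\in\mathbb{B}^{n_x}_{K_{\tilde{x}}}$ claimed in the theorem. Because moreover $x_k\in\mathbb{B}^{n_x}_{K_x}$ by Assumption~\ref{ass:bounded-state}, all arguments of $h$ in~\eqref{eq:error-system-output} stay in a fixed compact set, so local Lipschitzness of $h$ yields constants $L_x,L_p,L_w\ge0$ with $\|\tilde{y}_{i,k}\|\le L_x\|\tilde{x}_{i,k}\|+L_p\|\tilde{p}_i\|+L_w\|w_k\|$; in particular the output errors are uniformly bounded, say by $\bar{Y}$.

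Next I would bound the monitoring signals from both sides. Since $\lambda^{k-1-j}\ge\lambda^{N_{\mathrm{pe}}-1}$ for $j\in\mathbb{N}_{[k-N_{\mathrm{pe}},k-1]}$, the PE condition~\eqref{eq:PE} gives, for every $k\ge N_{\mathrm{pe}}$ and every $i$,
\[
	\mu_{i,k}\ \ge\ \lambda^{N_{\mathrm{pe}}-1}\!\!\sum_{j=k-N_{\mathrm{pe}}}^{k-1}\!\!\|\tilde{y}_{i,j}\|^2\ \ge\ \lambda^{N_{\mathrm{pe}}-1}\,\alpha_{\tilde{y}}\big(\|\tilde{p}_i\|\big).
\]
For the upper bound, let $i^\star$ index a sample nearest to $p$, so $\|\tilde{p}_{i^\star}\|\le\delta(N)\coloneqq\max_{p\in\mathbb{P}}\min_{i\in\pazocal{N}}\|\hat{p}_i-p\|$ with $\delta(N)\to0$ by~\eqref{eq:sampling-scheme}. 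Splitting the geometric sum~\eqref{eq:monitoring-signals} at an index $k-M_0$, the ``old'' tail is at most $\tfrac{\lambda^{M_0}}{1-\lambda}\bar{Y}^2$, made arbitrarily small by enlarging $M_0$; the ``recent'' block of $M_0$ terms is controlled through the Lipschitz output bound and the ISS estimate, whose $\beta$-term has decayed once $k$ is large, leaving each $\|\tilde{y}_{i^\star,j}\|$ of the order of $L_p\delta(N)$ plus $\pazocal{K}_\infty$-terms in the noise sequence norms. Hence there are an integer $M\ge N_{\mathrm{pe}}$ and functions $\rho,\vartheta_v,\vartheta_w\in\pazocal{K}_\infty$ such that
\[
	\mu_{i^\star,k}\ \le\ \rho\big(\delta(N)\big) + \vartheta_v\big(\|\{v_j\}\|\big) + \vartheta_w\big(\|\{w_j\}\|\big),\qquad k\ge M.
\]

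Finally I would combine the pieces. By~\eqref{eq:selection-criterion} we have $\mu_{\pi_k,k}\le\mu_{i^\star,k}$, so chaining the lower and upper bounds and inverting the $\pazocal{K}_\infty$ function $\alpha_{\tilde{y}}$ bounds $\|\tilde{p}_{\pi_k}\|$, for $k\ge M$, by $\alpha_{\tilde{y}}^{-1}$ of $\lambda^{-(N_{\mathrm{pe}}-1)}$ times the right-hand side above. The weak triangle inequality for $\pazocal{K}_\infty$ functions, $\kappa(a+b+c)\le\kappa(3a)+\kappa(3b)+\kappa(3c)$, then separates this into a sampling term $\epsilon_p(N)\coloneqq\alpha_{\tilde{y}}^{-1}\big(3\lambda^{-(N_{\mathrm{pe}}-1)}\rho(\delta(N))\big)$, which tends to $0$ as $N\to\infty$, plus two $\pazocal{K}_\infty$-terms in $\|\{v_j\}\|$ and $\|\{w_j\}\|$ that define $\upsilon_{\tilde{p}},\omega_{\tilde{p}}$; choosing $N\ge N^\star$ so large that $\epsilon_p(N)\le\nu_{\tilde{p}}$ yields the first line of~\eqref{eq:convergence-static}. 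For the state estimate I feed $\|\tilde{p}_{\pi_k}\|$ back into the ISS estimate applied to the selected observer,
\[
	\|\tilde{x}_k\|=\|\tilde{x}_{\pi_k,k}\|\le\beta\big(\Delta_{\tilde{x}},k\big)+\gamma_{\tilde{p}}\big(\|\tilde{p}_{\pi_k}\|\big)+\gamma_v\big(\|\{v_j\}\|\big)+\gamma_w\big(\|\{w_j\}\|\big),
\]
and take $\limsup_{k\to\infty}$ (so $\beta\to0$). Substituting $\|\tilde{p}_{\pi_k}\|\le\epsilon_p(N)+\upsilon_{\tilde{p}}(\|\{v_j\}\|)+\omega_{\tilde{p}}(\|\{w_j\}\|)$ and again splitting with the weak triangle inequality produces the second line of~\eqref{eq:convergence-static}, with $\upsilon_{\tilde{x}},\omega_{\tilde{x}}$ the resulting noise-dependent terms; here $N^\star$ is enlarged if necessary so that the sampling part $\gamma_{\tilde{p}}(3\epsilon_p(N))\le\nu_{\tilde{x}}$, which is possible since $\epsilon_p(N)\to0$.

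The main obstacle I anticipate is the uniform-in-$k$ upper bound on $\mu_{i^\star,k}$: since $\pi_k$ may switch at every instant, the analysis cannot follow a single trajectory but must guarantee simultaneously for all $k\ge M$ that a fixed good observer keeps its discounted monitoring signal small. This is exactly what forces the transient/steady-state split of the geometric sum and the reliance on the decay of the ISS $\beta$-term, and it is where the constants $\lambda$, $N_{\mathrm{pe}}$ and the sampling radius $\delta(N)$ must be balanced against one another. A secondary care point is the bookkeeping: only the sequence norms $\|\{v_j\}\|$, $\|\{w_j\}\|$ (not the a priori bounds $\Delta_v,\Delta_w$) may appear inside the final $\pazocal{K}_\infty$ terms, so the Lipschitz and ISS estimates must be arranged to expose these norms throughout.
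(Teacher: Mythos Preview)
Your approach is correct and essentially identical to the paper's: the ISS bound (their Lemma~\ref{lem:ISS}) yields $K_{\tilde{x}}$, the PE lower bound together with a Lipschitz/ISS upper bound on the monitoring signals (packaged as their Lemma~\ref{lem:monitoring-bounds}) sandwich $\mu_{\pi_k,k}\le\mu_{i^\star,k}$, and the weak triangle inequality for $\pazocal{K}_\infty$-functions closes both lines of~\eqref{eq:convergence-static}. The only looseness is that your upper bound on $\mu_{i^\star,k}$ should retain an explicit transient residual $\epsilon>0$---the paper keeps it as a free design parameter with $\epsilon<\underline{\chi}(\nu)$ and then absorbs it into the choice of $N^\star$ via~\eqref{eq:sampling-density}---since for any finite $M$ the old-tail and $\beta$-decay contributions do not vanish identically and cannot be hidden inside $\rho(\delta(N))$ or the noise terms.
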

\noindent The proof of Theorem~\ref{thm:static-sampling} is provided in the Appendix. In the noiseless case, i.e., $v_k=w_k=0$, the convergence margins can be made arbitrarily small since $\nu_{\tilde{p}}$ and $\nu_{\tilde{x}}$ can be made arbitrarily small by using sufficiently many observers. However, this is impossible in the presence of noise due to the terms in~\eqref{eq:convergence-static} depending on $\left\|\left\{v_k\right\}\right\|$ and $\left\|\left\{w_k\right\}\right\|$. 

\section{Supervisory observer: dynamic~sampling~policy}\label{sec:dynamic-sampling}
In this section we develop a dynamic sampling policy for joint parameter and state estimation of~\eqref{eq:system}. As stated in Theorem~\ref{thm:static-sampling}, when using a sufficiently large number of observers $N$, the parameter estimate converges to a given margin within a finite time. We exploit this result in the dynamic sampling policy to iteratively zoom in on the parameter subspace defined by the aforementioned margins through resampling. As a result, stricter bounds on the parameter and state estimates can be guaranteed compared to the static sampling policy for a given number of observers.

\subsection{Dynamic sampling policy}
Since the parameter set $\mathbb{P}$ is compact, there exist $p_c\in\mathbb{R}^{n_p}$ and $\Delta_0\in\mathbb{R}_{>0}$ such that 
\begin{equation}
	\mathbb{P}\subseteq \mathbb{B}^{n_p}\left(p_c,\Delta_0\right).
\end{equation}
Let $\nu\in\mathbb{R}_{>0}$ denote the desired bound on the parameter error, which either represents the required bound on the parameter error to guarantee asymptotic convergence of the state estimation error to within a desired margin or a desired bound imposed directly on the parameter estimation error. We also introduce a design parameter $\alpha\in(0,1)$, the so-called zooming factor, which determines the rate at which the considered parameter set shrinks. The dynamic sampling policy is initialized at $k=k_0=0$ by sampling $\mathbb{P}_0\coloneqq\mathbb{P}$, using a sampling scheme, which satisfies~\eqref{eq:sampling-scheme}, to obtain $N\in\mathbb{N}_{\geqslant 1}$ parameter samples $\hat{p}_{i,0}$, $i\in\pazocal{N}$. Here, $N$ is chosen sufficiently large such that, by Theorem~\ref{thm:static-sampling}, it holds for sufficiently large $M\in\mathbb{N}_{\geqslant 1}$ that $\left\|\hat{p}_{\pi_k,0}-p\right\| \leqslant C$ for all $k\in\mathbb{N}_{\geqslant M}$ with
\begin{equation}
	C\coloneqq \max\left\{\nu,\alpha\Delta_0\right\} + \upsilon_{\tilde{p}}\left(\left\|\left\{v_j\right\}\right\|\right) + \omega_{\tilde{p}}\left(\left\|\left\{w_j\right\}\right\|\right).
\end{equation} 
As a consequence, for $k\in\mathbb{N}_{\geqslant M}$, either the desired margin is achieved or $p\in \mathbb{P}_1\coloneqq\mathbb{B}^{n_p}(\hat{p}_{\pi_{k},0},\alpha\Delta_0+ \upsilon_{\tilde{p}}(\|\{v_j\}\|)+\omega_{\tilde{p}}(\|\{w_j\}\|))\cap\mathbb{P}_0$. Both cases cannot be distinguished on-line since the true parameter is unknown. Therefore, at $k=k_1$ with $k_1\in\mathbb{N}_{\geqslant M}$, even if the desired margin has already been achieved, the set $\mathbb{P}_1$ is sampled to obtain $N$ new samples $\left\{\hat{p}_{i,1}\right\}_{i\in\pazocal{N}}$. This procedure is performed iteratively at every $k_m$, $m\in\mathbb{N}$, with
\begin{equation}
	M_d \coloneqq k_{m+1} - k_m,
	\label{eq:intersampling-time}
\end{equation}
where $M_d\in\mathbb{N}_{\geqslant \max\left\{1,M\right\}}$ denotes the number of time steps between subsequent zoom-ins. The shrinking parameter subset $\mathbb{P}_m$, $m\in\mathbb{N}$, is defined recursively by
\begin{align}
	\mathbb{P}_{m+1} \coloneqq \mathbb{D}_m\cap\mathbb{P}_{m}
	\label{eq:shrinking-parameter-sets}
\end{align}
with $\mathbb{P}_0 = \mathbb{P}$, $\Delta_m\coloneqq \alpha\Delta_{m-1} = \alpha^m\Delta_0$ and $\mathbb{D}_m = \mathbb{B}^{n_p}\left(\hat{p}_{\pi_{k_m+1},m},\Delta_{m+1} + \upsilon_{\tilde{p}}\left(\left\|\left\{v_j\right\}\right\|\right) + \omega_{\tilde{p}}\left(\left\|\left\{w_j\right\}\right\|\right)\right)$. The spaces $\mathbb{P}_m$, $m\in\mathbb{N}$, are sampled in such a way that
\begin{equation}
	\max_{p\in\mathbb{P}_m}\min_{i\in\pazocal{N}} \left\|\hat{p}_{i,m}-p\right\| \leqslant \rho\left(\Delta_m,N\right)
	\label{eq:dynamic-sampling}
\end{equation}
with $\rho\in\pazocal{KL}$ and where $\left\{\hat{p}_{i,m}\right\}_{i\in\pazocal{N}}$ denote the obtained samples. The corresponding parameter errors are denoted $\tilde{p}_{i,m} = \hat{p}_{i,m}-p$, $i\in\pazocal{N}$ and $m\in\mathbb{N}$. It is worth mentioning that once the desired margin is achieved the algorithm still keeps zooming in and it can occur that, after zooming in a certain number of times, the subset that is being sampled no longer contains the true parameter. Regardless, the true parameter still lies within the desired margin of the selected parameter estimate and the convergence guarantees provided in this section remain valid. 

The dynamic sampling policy is incorporated into the multi-observer by designing state observers for each parameter sample $\hat{p}_{i,m}$, $i\in\pazocal{N}$, for the time instance $k\in\mathbb{N}_{\left[k_m,k_{m+1}-1\right]}$, $m\in\mathbb{N}$. The $i$-th state observer is given by
\begin{subequations}
	\begin{alignat}{2}
		\hat{x}_{i,k+1} &= \hat{f}(\hat{x}_{i,k},\hat{p}_{i,m},u_k,y_k),\\
		\hat{y}_{i,k} &= h(\hat{x}_{i,k},\hat{p}_{i,m},u_k,0),
	\end{alignat}
	\label{eq:dynamic-multi-observer}%
\end{subequations}
for $k\in\mathbb{N}_{\left[k_m,k_{m+1}-1\right]}$, $m\in\mathbb{N}$. Here, $\hat{f}\,:\,\mathbb{R}^{n_x}\times\left\{\hat{p}_{i,m}\right\}_{i\in\pazocal{N},m\in\mathbb{N}}\times\mathbb{R}^{n_u}\times\mathbb{R}^{n_y}\rightarrow\mathbb{R}^{n_x}$ is well-designed such that the solutions to~\eqref{eq:dynamic-multi-observer} are defined for all $k\in\mathbb{N}$ and any initial condition $\hat{x}_{i,0}\in\mathbb{R}^{n_x}$, input sequence $\left\{u_k\right\}_{k\in\mathbb{N}}$, output sequence $\left\{y_k\right\}_{k\in\mathbb{N}}$ and parameters $\hat{p}_{i,m}\in\mathbb{P}$, $i\in\pazocal{N}$ and $m\in\mathbb{N}$. We assume these observers satisfy Assumption~\ref{ass:iss-lyap}.

The dynamic sampling policy requires the monitoring signals used by the supervisor to be redefined. The redefined monitoring signals are reset upon resampling, i.e.,
\begin{equation}
	\mu_{i,k+1} = \begin{cases}
		\left\|\tilde{y}_{i,k}\right\|^2,\quad &k\in\left\{k_m\right\}_{m\in\mathbb{N}},\\
		\lambda\mu_{i,k} + \left\|\tilde{y}_{i,k}\right\|^2,\quad &k\in\mathbb{N}\setminus\left\{k_m\right\}_{m\in\mathbb{N}},
	\end{cases}
	\label{eq:dynamic-monitoring-signals}
\end{equation}
for $i\in\pazocal{N}$, with $\lambda\in\left[0,1\right)$. As before, the supervisor selects an observer from the multi-observer~\eqref{eq:dynamic-multi-observer} using the signal $\pi_k$ as defined in~\eqref{eq:selection-criterion}. The definition of the state estimate $\hat{x}_{k}$ and corresponding error $\tilde{x}_k$ in~\eqref{eq:estimates} are unchanged, however, the parameter estimate and corresponding error are redefined as
\begin{equation}
	\hat{p}_k\coloneqq \hat{p}_{\pi_k,m}\text{ and }\tilde{p}_{k} \coloneqq \tilde{p}_{\pi_k,m},\, \text{for }k\in\mathbb{N}_{\left[k_m+1,k_{m+1}\right]}.
	\label{eq:dynamic-parameter-estimate}
\end{equation}

\subsection{Convergence guarantees}
The parameter and state estimates produced by the supervisory observer using a dynamic sampling scheme satisfy similar convergence guarantees as in the static sampling case. This is stated in the following theorem for which the proof is provided in the Appendix.
\begin{theorem}
	\label{thm:dynamic-sampling}
	Consider the system~\eqref{eq:system}, the multi-observer~\eqref{eq:dynamic-multi-observer}, the monitoring signals~\eqref{eq:dynamic-monitoring-signals}, the selection criterion~\eqref{eq:selection-criterion}, the parameter estimate and corresponding error~\eqref{eq:dynamic-parameter-estimate}, the state estimate and corresponding error in~\eqref{eq:estimates} and the dynamic sampling policy~\eqref{eq:intersampling-time}-\eqref{eq:shrinking-parameter-sets}. Suppose Assumptions~\ref{ass:bounded-sequences}-\ref{ass:PE} hold. For any $\Delta_{\tilde{x}},\Delta_x,\Delta_u,\Delta_v,\Delta_w\in\mathbb{R}_{\geqslant 0}$, any margins $\nu_{\tilde{p}},\nu_{\tilde{x}}\in\mathbb{R}_{>0}$ and zooming factor $\alpha\in\left(0,1\right)$, there exist functions $\upsilon_{\tilde{p}},\upsilon_{\tilde{x}},\omega_{\tilde{p}},\omega_{\tilde{x}}\in\pazocal{K}_{\infty}$, scalar $K_{\tilde{x}}\in\mathbb{R}_{>0}$ and sufficiently large integers $M^\star\in\mathbb{N}_{\geqslant 1}$, $\bar{M}\in\mathbb{N}_{\geqslant 1}$ and $N^\star\in\mathbb{N}_{\geqslant 1}$ such that for any $N\in\mathbb{N}_{\geqslant N^\star}$ and $M_d\in\mathbb{N}_{\geqslant M^{\star}}$, it holds for any $\tilde{x}_{i,0}\in\mathbb{B}_{\Delta_{\tilde{x}}}^{n_x}$, $i\in\pazocal{N}$, $p\in\mathbb{P}$, $x_0\in\mathbb{B}_{\Delta_x}^{n_x}$, $\left\{v_k\right\}_{k\in\mathbb{N}}$ with $v_k\in\mathbb{B}_{\Delta_v}^{n_v}$, $\left\{w_k\right\}_{k\in\mathbb{N}}$ with $w_k\in\mathbb{B}_{\Delta_w}^{n_w}$ and for some input sequence $\left\{u_k\right\}_{k\in\mathbb{N}}$ with $u_k\in\mathbb{B}_{\Delta_u}^{n_u}$ for $k\in\mathbb{N}$, which satisfies Assumption~\ref{ass:PE}, that $\tilde{x}_{i,k}\in\mathbb{B}_{K_{\tilde{x}}}^{n_x}$ for all $k\in\mathbb{N}$, $i\in\pazocal{N}$, and
	\begin{align}
			&\left\|\tilde{p}_k\right\| \leqslant \nu_{\tilde{p}} + \upsilon_{\tilde{p}}\left(\left\|\left\{v_j\right\}\right\|\right) + \omega_{\tilde{p}}\left(\left\|\left\{w_j\right\}\right\|\right),\quad \forall k\in\mathbb{N}_{\geqslant M},\nonumber\\
			&\limsup_{k\rightarrow \infty}\left\|\tilde{x}_k\right\| \leqslant \nu_{\tilde{x}} + \upsilon_{\tilde{x}}\left(\left\|\left\{v_j\right\}\right\|\right) + \omega_{\tilde{x}}\left(\left\|\left\{w_j\right\}\right\|\right).
		\label{eq:convergence-dynamic}%
	\end{align}
\end{theorem}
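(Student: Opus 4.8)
The plan is to treat each zoom-in epoch $\mathbb{N}_{[k_m,k_{m+1}-1]}$ as a self-contained instance of the static scheme of Section~\ref{sec:static-sampling}, propagate the resulting parameter bound through an induction on the epoch index $m$, and finally convert it into a state bound via Assumption~\ref{ass:iss-lyap}. Because the monitoring signals~\eqref{eq:dynamic-monitoring-signals} are reset at every $k_m$, the supervisor's behaviour on $\mathbb{N}_{[k_m,k_{m+1}-1]}$ coincides with that of the static supervisor applied to the compact set $\mathbb{P}_m$, whose samples obey~\eqref{eq:dynamic-sampling}. Taking $M_d\geqslant M^\star$ with $M^\star$ exceeding both the PE horizon $N_{\mathrm{pe}}$ of Assumption~\ref{ass:PE} and the finite time $M$ of Theorem~\ref{thm:static-sampling} makes each epoch long enough for the static bound to take effect, and a single $N\geqslant N^\star$ suffices for \emph{every} epoch: since uniform sampling makes $\rho(\Delta_m,N)$ proportional to the radius $\Delta_m$ and the static margin is proportional to that resolution, ``noiseless margin $\leqslant\alpha\Delta_m$'' is an $m$-independent requirement fixed by $\alpha$. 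Invoking Theorem~\ref{thm:static-sampling} on $\mathbb{P}_m$ (valid while $p\in\mathbb{P}_m$) then gives $e_m:=\|\hat{p}_{\pi_{k_{m+1}},m}-p\|\leqslant\alpha\Delta_m+\upsilon_{\tilde{p}}(\|\{v_j\}\|)+\omega_{\tilde{p}}(\|\{w_j\}\|)=\Delta_{m+1}+\upsilon_{\tilde{p}}(\|\{v_j\}\|)+\omega_{\tilde{p}}(\|\{w_j\}\|)$ at the end of the epoch.

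Next I would run the induction. The base case $p\in\mathbb{P}_0=\mathbb{P}$ is Assumption~\ref{ass:compact-parameter-set}. For the step, the epoch bound shows $e_m$ is at most the radius of $\mathbb{D}_m=\mathbb{B}^{n_p}(\hat{p}_{\pi_{k_m+1},m},\Delta_{m+1}+\upsilon_{\tilde{p}}(\|\{v_j\}\|)+\omega_{\tilde{p}}(\|\{w_j\}\|))$, so $p\in\mathbb{D}_m$ and hence $p\in\mathbb{D}_m\cap\mathbb{P}_m=\mathbb{P}_{m+1}$; the $\upsilon_{\tilde{p}},\omega_{\tilde{p}}$-enlargement of $\mathbb{D}_m$ is precisely what keeps the true parameter inside the shrinking sets despite the noise. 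The induction therefore never breaks and $e_m\leqslant\Delta_{m+1}+\upsilon_{\tilde{p}}(\|\{v_j\}\|)+\omega_{\tilde{p}}(\|\{w_j\}\|)$ decays geometrically to the noise floor. Since $\Delta_m=\alpha^m\Delta_0\to 0$, there is a finite $\bar{m}$ with $\alpha\Delta_{\bar{m}}\leqslant\nu$, so from $\bar{M}:=k_{\bar{m}+1}$ onward the parameter margin is $\nu+\upsilon_{\tilde{p}}(\|\{v_j\}\|)+\omega_{\tilde{p}}(\|\{w_j\}\|)$; counting epochs fixes $\bar{M}$, while the reduction fixes $M^\star$ and $N^\star$.

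The step I expect to be the main obstacle is certifying the margin for \emph{all} $k\geqslant\bar{M}$, including the initial transient of each epoch, and reconciling with the remark after~\eqref{eq:dynamic-parameter-estimate} that the sampled subset may exclude $p$ if one only keeps the coarser bound $\max\{\nu,\alpha\Delta_m\}$. The within-epoch transient is harmless because every sample of $\mathbb{P}_m$ lies in $\mathbb{D}_{m-1}$, so any selected $\hat{p}_{\pi_k,m}$ obeys $\|\hat{p}_{\pi_k,m}-p\|\leqslant e_{m-1}+\Delta_m+\upsilon_{\tilde{p}}(\|\{v_j\}\|)+\omega_{\tilde{p}}(\|\{w_j\}\|)$, a constant multiple of the target that is absorbed into $\nu_{\tilde{p}},\upsilon_{\tilde{p}},\omega_{\tilde{p}}$. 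Under the coarser bookkeeping, where $p$ may leave $\mathbb{P}_m$, I would instead use that the previous centre $\hat{p}_{\pi_{k_m+1},m}$ is retained in $\mathbb{P}_{m+1}$, giving $\mathrm{dist}(p,\mathbb{P}_{m+1})\leqslant e_m$ so that the geometrically shrinking radii keep the accumulated displacement summable. Pinning down this interplay between the non-vanishing noise margin and the shrinking sets is the delicate part.

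Finally, the state bound follows exactly as in Theorem~\ref{thm:static-sampling}. Once $\|\tilde{p}_{\pi_k}\|$ is confined to $\nu_{\tilde{p}}+\upsilon_{\tilde{p}}(\|\{v_j\}\|)+\omega_{\tilde{p}}(\|\{w_j\}\|)$, Assumption~\ref{ass:iss-lyap} (through Lemma~\ref{lem:ISS}) bounds the selected observer's state error by a $\pazocal{KL}$ transient plus ISS gains of $\tilde{p}_{\pi_k}$, $v$ and $w$; since the observers run without state resets and $\sigma(0,0,0,x,u)=0$, taking $\limsup_{k\to\infty}$ removes the transient and produces the asserted bound with suitable $\upsilon_{\tilde{x}},\omega_{\tilde{x}}\in\pazocal{K}_{\infty}$ and margin $\nu_{\tilde{x}}$. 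The uniform bound $\tilde{x}_{i,k}\in\mathbb{B}_{K_{\tilde{x}}}^{n_x}$ for all observers is the same ISS estimate applied with the a~priori bounded inputs $\tilde{p}_{i,m}\in\mathbb{D}$, $v_k$ and $w_k$.
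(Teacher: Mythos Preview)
Your high-level plan is sound and close to the paper's, but the step where you claim a single $N\geqslant N^\star$ works for \emph{every} epoch has a genuine gap. You argue that because $\rho(\Delta_m,N)$ scales with $\Delta_m$ and ``the static margin is proportional to that resolution'', the requirement ``noiseless margin $\leqslant\alpha\Delta_m$'' is $m$-independent. It is not: the noiseless margin delivered by Lemma~\ref{lem:monitoring-bounds} (which underlies Theorem~\ref{thm:static-sampling}) is $\underline{\chi}^{-1}\!\bigl(3\epsilon+3\bar{\chi}_{\tilde{p}}(\rho(\Delta_m,N))\bigr)$ with $\underline{\chi},\bar{\chi}_{\tilde{p}}\in\pazocal{K}_\infty$ arbitrary and $\epsilon>0$ \emph{fixed}. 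As $\Delta_m\to 0$ this margin does not go to zero---it saturates at $\underline{\chi}^{-1}(3\epsilon)>0$---so the inequality $\underline{\chi}^{-1}(3\epsilon+3\bar{\chi}_{\tilde{p}}(\rho(\Delta_m,N)))\leqslant\alpha\Delta_m$ necessarily fails for all sufficiently small $\Delta_m$, no matter how large $N$ is. Your induction ``$p\in\mathbb{P}_m\Rightarrow p\in\mathbb{P}_{m+1}$ for all $m$'' therefore cannot be sustained with a single $(N,\epsilon,M_d)$, and the argument stalls before the desired margin is reached.

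The paper closes this gap by splitting into two regimes via a threshold $\bar{\Delta}\in(0,\Delta_0)$. For the finitely many epochs with $\Delta_m\in[\bar{\Delta},\Delta_0]$ it imposes the zoom-in condition $3\bar{\chi}_{\tilde{p}}(\rho(d,N))+3\epsilon\leqslant\underline{\chi}(\alpha d)$ only on this \emph{compact} interval, which a single $N^\star$ can achieve; this drives $p\in\mathbb{P}_{m+1}$. Once $\Delta_m\leqslant\bar{\Delta}$, it abandons the zoom-in inequality altogether and instead uses the separate condition $3\bar{\chi}_{\tilde{p}}(\rho(\bar{\Delta},0))+3\epsilon\leqslant\underline{\chi}(\bar{\nu})$, which directly yields the terminal margin $\bar{\nu}=\min\{\nu_{\tilde{p}},\tfrac{1}{3}\gamma_{\tilde{p}}^{-1}(\nu_{\tilde{x}})\}$ without needing $p\in\mathbb{P}_{m+1}$ any longer. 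This two-case device is precisely what your proportionality heuristic skips; once you insert it, the rest of your outline (within-epoch transient via $\mathbb{P}_m\subset\mathbb{D}_{m-1}$, and the state bound via Lemma~\ref{lem:ISS}) matches the paper's proof.
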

\noindent Theorem~\ref{thm:dynamic-sampling} ensures the same guarantees as in Theorem~\ref{thm:static-sampling}, but it typically requires less observers to do so using the dynamic sampling policy, as will be illustrated in Section~\ref{sec:case-study}.
\begin{remark}
	The dynamic sampling policy in this paper uses a fixed number of samples, however, an alternative policy using the DIRECT algorithm which adds samples on-line is proposed for the continuous-time setting in~\cite{Chong2017}. This eliminates the need to estimate the required number of observers a-priori, which can be challenging. 
\end{remark}

\section{Case study}\label{sec:case-study}
In this section, we apply the results of Theorems~\ref{thm:static-sampling} and~\ref{thm:dynamic-sampling} to estimate the parameters and states of an example within the class of nonlinear systems given by
\begin{align}
		x_{k+1} &= A\left(p\right)x_k + G\left(p\right)\phi\left(Hx_k\right) + B\left(p\right)\left(u_k+v_k\right),\nonumber\\
		y_k &= Cx_k + w_k,
		\label{eq:illustrative-example}
\end{align}
where $x_k\in\mathbb{R}^{n_x}$, $u_k\in\mathbb{R}^{n_u}$, $v_k\in\mathbb{R}^{n_v}$, $w_k\in\mathbb{R}^{n_w}$ and $p\in\mathbb{R}^{n_p}$. Suppose Assumptions~\ref{ass:bounded-sequences}-\ref{ass:bounded-state} hold and $A(p)$, $B(p)$ and $G(p)$ are continuous in $p$ on $\mathbb{P}$. The nonlinearity $\phi\,:\,\mathbb{R}^{n_\phi}\rightarrow\mathbb{R}^{n_\phi}$ is such that $\phi(v) = (\phi_1(v_1),\hdots,\phi_{n_\phi}(v_{n_\phi}))$ for $v=(v_1,\hdots,v_{n_\phi})\in\mathbb{R}^{n_\phi}$ and there exist constants $\ell_{i}\in\mathbb{R}_{>0}$, $i\in\mathbb{N}_{\left[1,n_\phi\right]}$, such that, for all $v\in\mathbb{R}$, we have
\small\begin{equation}
	0\leqslant \frac{\partial\phi_i(v)}{\partial v}\leqslant \ell_i.
	\label{eq:lipschitz}
\end{equation}\normalsize
For $\hat{p}\in\mathbb{P}$, a state observer of the form~\cite{Ibrir2007,Yang2018}
\begin{subequations}
	\begin{alignat}{2}
		\hat{x}_{k+1} &= A(\hat{p})\hat{x}_{k}+ G(\hat{p})\phi(H\hat{x}_{k}+K(\hat{p})(C\hat{x}_{k}-y_k))\nonumber\\
		&\quad + B(\hat{p})u_k + L(\hat{p})(C\hat{x}_{k}-y_k),\\
		\hat{y}_{k} &= C\hat{x}_{k},
	\end{alignat}
	\label{eq:circle-criterion-observer}%
\end{subequations}
is designed by synthesizing observer matrices $K\left(\hat{p}\right)$ and $L\left(\hat{p}\right)$ such that the following proposition applies.
\begin{proposition}\label{prop:circle-criterion-observer}
	Consider the system~\eqref{eq:illustrative-example} and state observer~\eqref{eq:circle-criterion-observer}. Suppose there exist $P=P^\top\in\mathbb{R}^{n_x\times n_x}$, $M=\mathrm{diag}(m_1,\hdots,m_{n_\phi})$ with $m_i\in\mathbb{R}_{>0}$, $i\in\mathbb{N}_{\left[1,n_\phi\right]}$, and $\kappa_{\tilde{x}},\kappa_v,\kappa_w\in\mathbb{R}_{>0}$, such that $P\succ 0$ and, for all $p,\hat{p}\in\mathbb{P}$,
	\small\begin{equation}
		\begin{bmatrix}
			-P & \star & \star & \star & \star\\
			-\bm{A}^\top(\hat{p})P & \frac{\kappa_{\tilde{x}}}{2}I-\frac{1}{2}P & \star & \star & \star\\
			-G^\top(\hat{p})P & \frac{1}{2}M\bm{H}(\hat{p}) & -M\Lambda^{-1} & \star & \star\\
			B^\top(p)P & 0 & 0 & -\frac{\kappa_v}{2}I & \star\\
			L^\top(\hat{p})P & 0 & -\frac{1}{2}K^\top(\hat{p})M & 0 & -\frac{\kappa_w}{2}I
		\end{bmatrix}\preccurlyeq 0,
		\label{eq:design-inequality}
	\end{equation}\normalsize
	where $\bm{A}(\hat{p}) = A(\hat{p})+L(\hat{p})C$, $\bm{H}(\hat{p}) = H+K(\hat{p})C$ and $\Lambda = \mathrm{diag}(\ell_1,\hdots,\ell_{n_{\phi}})$, then Assumption~\ref{ass:iss-lyap} is satisfied.
\end{proposition}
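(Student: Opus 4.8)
The plan is to verify Assumption~\ref{ass:iss-lyap} with the quadratic candidate $V(\hat p,\tilde x)=\tilde x^\top P\tilde x$. Since $P\succ 0$, the sandwich bounds~\eqref{eq:sandwich-bounds} hold with quadratic $\alpha_1,\alpha_2\in\pazocal{K}_\infty$, after converting between the Euclidean norm intrinsic to $V$ and the arbitrary $p$-norm $\|\cdot\|$ of the paper via norm-equivalence constants; continuity of $V(\hat p,\cdot)$ is immediate. The bulk of the work is to establish the dissipation inequality~\eqref{eq:diffV}.

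First I would write out the error dynamics explicitly. Substituting $C\hat x_{k}-y_k=C\tilde x_{i,k}-w_k$ into~\eqref{eq:circle-criterion-observer} and subtracting~\eqref{eq:illustrative-example} yields
\[
\tilde x_{i,k+1}=\bm{A}(\hat p)\tilde x_{i,k}+G(\hat p)\psi_k-B(p)v_k-L(\hat p)w_k+d_k,
\]
where $\psi_k\coloneqq \phi(Hx_k+\bm{H}(\hat p)\tilde x_{i,k}-K(\hat p)w_k)-\phi(Hx_k)$ is the nonlinearity increment and $d_k\coloneqq(A(\hat p)-A(p))x_k+(G(\hat p)-G(p))\phi(Hx_k)+(B(\hat p)-B(p))u_k$ collects all terms driven by the parameter mismatch $\tilde p=\hat p-p$. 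The slope restriction~\eqref{eq:lipschitz} gives, componentwise via the mean value theorem, the incremental sector bound $\psi_{k,i}(\chi_{k,i}-\ell_i^{-1}\psi_{k,i})\ge 0$ with $\chi_k\coloneqq\bm{H}(\hat p)\tilde x_{i,k}-K(\hat p)w_k$; scaling by $m_i>0$ and summing produces $\psi_k^\top M(\chi_k-\Lambda^{-1}\psi_k)\ge 0$.

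The key step is to connect the LMI~\eqref{eq:design-inequality} to a dissipation inequality for the \emph{nominal} error $\tilde x^+_{\mathrm{nom}}\coloneqq\bm{A}(\hat p)\tilde x+G(\hat p)\psi-B(p)v-L(\hat p)w$. Because the $(1,1)$ block equals $-P\prec 0$, I would take the Schur complement about it; the resulting $S\preceq 0$, evaluated on $\eta\coloneqq(\tilde x,\psi,v,w)$, has the property that all $P$-weighted quadratic and cross terms collapse exactly into $V(\tilde x^+_{\mathrm{nom}})$, leaving an inequality in which the only surviving remainder terms are $\tfrac{\kappa_{\tilde x}}{2}\|\tilde x\|^2-\tfrac12\tilde x^\top P\tilde x$, the sector expression $\psi^\top M(\chi-\Lambda^{-1}\psi)$, and $-\tfrac{\kappa_v}{2}\|v\|^2-\tfrac{\kappa_w}{2}\|w\|^2$. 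Discarding the nonnegative sector term then yields $V(\tilde x^+_{\mathrm{nom}})\le\tfrac12\tilde x^\top P\tilde x-\tfrac{\kappa_{\tilde x}}{2}\|\tilde x\|^2+\tfrac{\kappa_v}{2}\|v\|^2+\tfrac{\kappa_w}{2}\|w\|^2$.

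Finally I would reinstate $d_k$: writing $\tilde x^+=\tilde x^+_{\mathrm{nom}}+d$ and using $2a^\top Pb\le a^\top Pa+b^\top Pb$ gives $V(\tilde x^+)\le 2V(\tilde x^+_{\mathrm{nom}})+2d^\top Pd$, whence $V(\tilde x^+)-V(\tilde x)\le-\kappa_{\tilde x}\|\tilde x\|^2+\kappa_v\|v\|^2+\kappa_w\|w\|^2+2d^\top Pd$. This is~\eqref{eq:diffV} with $\alpha_3$ quadratic and $\sigma(\tilde p,v,w,x,u)\coloneqq\kappa_v\|v\|^2+\kappa_w\|w\|^2+2d^\top Pd$; continuity of $A$, $B$, $G$ and $\phi$ makes $\sigma$ continuous and nonnegative, and $\tilde p=0$, $v=0$, $w=0$ forces $d=0$, so $\sigma(0,0,0,x,u)=0$. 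The main obstacle is exactly the handling of the parameter-mismatch term $d$, which is absent from~\eqref{eq:design-inequality}: the $\tfrac12$-factors appearing in~\eqref{eq:design-inequality} are precisely the slack needed so that the factor-$2$ loss in the Young-type inequality still returns the full $\tilde x^\top P\tilde x=V(\tilde x)$ while preserving a genuine decrease $-\kappa_{\tilde x}\|\tilde x\|^2$.
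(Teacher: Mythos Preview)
Your proposal is correct and follows essentially the same approach as the paper: quadratic Lyapunov function $V=\tilde x^\top P\tilde x$, Schur complement of~\eqref{eq:design-inequality} about the $(1,1)$ block to bound $V$ of the nominal error, the incremental sector property of $\phi$ to discard the nonlinearity term, and a Young-type split $V(\tilde x^+_{\mathrm{nom}}+d)\le 2V(\tilde x^+_{\mathrm{nom}})+2d^\top Pd$ to absorb the parameter-mismatch term $d$ (the paper's $\psi$); the only cosmetic difference is that the paper applies the factor-$2$ split \emph{before} invoking the Schur complement rather than after. One small technical point to fix: Assumption~\ref{ass:iss-lyap} requires $\sigma$ to be a function of $(\tilde p,v,w,x,u)$ only, whereas your $d$ depends on $p$ and $\hat p$ separately (not just on $\tilde p$); the paper handles this by taking $\sigma(\tilde p,v,w,x,u)\coloneqq\kappa_v\|v\|_2^2+\kappa_w\|w\|_2^2+2\sup_{p\in\mathbb{P}}d^\top Pd$, which is finite and continuous by compactness of $\mathbb{P}$ and continuity of $A$, $B$, $G$.
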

\noindent The proof for Proposition~\ref{prop:circle-criterion-observer} is provided in the Appendix. The condition in~\eqref{eq:design-inequality} represents infinitely many linear matrix inequalities (LMIs) in $P$, $PL(\hat{p})$, $M$, $MK(\hat{p})$, $\kappa_{\tilde{x}}$, $\kappa_v$ and $\kappa_w$, due to its dependence on $\hat{p}$ and $p$. In order to solve~\eqref{eq:design-inequality}, it either needs to be discretized or, as we will see in our case study, sometimes structure can be exploited to reduce~\eqref{eq:design-inequality} to a finite number of LMIs. If Proposition~\ref{prop:circle-criterion-observer} and Assumption~\ref{ass:PE} apply, then Theorems~\ref{thm:static-sampling} or~\ref{thm:dynamic-sampling} hold, respectively, when the static or dynamic sampling policy is used.

Consider the system~\eqref{eq:illustrative-example} with the following matrices
\small\begin{align}
	A(p) &= \begin{bmatrix}
		1 & T_s\\
		0 & 1
	\end{bmatrix} - p\begin{bmatrix}
		\frac{1}{2} & \frac{1}{2}\\
		1 & 1
	\end{bmatrix}T_s,\, G(p) = p\begin{bmatrix}
		\frac{T_s}{2}\\
		T_s
	\end{bmatrix},\nonumber\\
	H^\top &=\begin{bmatrix}
		1\\
		1
	\end{bmatrix},\,B(p)=\begin{bmatrix}T_s\\ T_s\end{bmatrix}+p\begin{bmatrix}T_s\\-T_s\end{bmatrix}\text{ and }C = I,
	\label{eq:system-matrices}
\end{align}\normalsize
which is obtained by discretizing a continuous-time system, see~\cite{Ibrir2007}, with sampling time $T_s=0.01$. The nonlinearity in~\eqref{eq:illustrative-example} is given by
\begin{equation}
	\phi(v) = v+\sin(v),
\end{equation}
which satisfies~\eqref{eq:lipschitz} with Lipschitz constant $\ell_1=2$. Moreover, the parameter $p$ belongs to $\mathbb{P}\coloneqq \left[1,50\right]$. This example is a variation on~\cite[Example 1]{Ibrir2007} and~\cite[Example 1]{Yang2018} where we included process and measurement noise and an additional parameter dependency in $B(p)$. Notice that the system matrices~\eqref{eq:system-matrices} all depend affinely on the unknown parameter. If we restrict the observer matrices $L(\hat{p})$ and $K(\hat{p})$ to also be affine in $p$, i.e.,
\begin{equation}
	L(\hat{p}) = L_0+\hat{p}L_1\quad\text{and}\quad K(\hat{p}) = K_0 + \hat{p}K_1,
	\label{eq:affine-observer-matrices}
\end{equation}
with $L_i\in\mathbb{R}^{n_x\times n_y}$ and $K_i\in\mathbb{R}^{n_\phi\times n_y}$, $i=0,1$, the LMI in~\eqref{eq:design-inequality} becomes affine in $(p,\hat{p})\in\mathbb{P}\times \mathbb{P}$. Since $\mathbb{P}\times\mathbb{P}$ is convex, the condition~\eqref{eq:design-inequality} is satisfied for all $p,\hat{p}\in\mathbb{P}$ if and only if it is satisfied at each of the $(2n_p)^2 = 4$ vertices~\cite{Scherer2011}. We set $\kappa_{\tilde{x}}=0.1$ and minimize $\kappa_v+5\kappa_w$ subject to~\eqref{eq:design-inequality}, for all $p,\hat{p}\in\left\{1,50\right\}$, by means of the MATLAB toolbox \textsc{YALMIP}~\cite{Lofberg2004} together with the external solver \textsc{MOSEK}~\cite{Mosek2019}. Restricting ourselves to a Lyapunov function, which is independent of $\hat{p}$ and to affine observer matrices~\eqref{eq:affine-observer-matrices} introduces conservatism compared to, for instance, sampling the parameter space and then solving the LMIs. However, it has the advantage that resampling in the dynamic sampling policy is computationally efficient as it only requires evaluating~\eqref{eq:affine-observer-matrices} without solving LMIs on-line.

\begin{figure}[!tb]
	\centering
	\includegraphics[width=.5\textwidth]{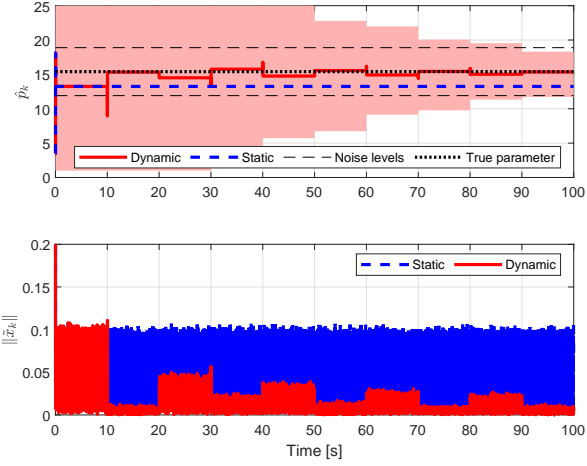}
	\caption{Parameter estimate (top) and norm of the state estimation error (bottom) using the static (dashed) and dynamic sampling policy (solid). The transparent regions indicate the set being sampled by the dynamic sampling policy. The dotted black line indicates the true parameter and the dashed black lines are the noise levels.}
	\label{fig:results}
\end{figure}

Both the static and dynamic sampling policy are implemented using $N=10$ equidistant parameter samples, i.e., $\hat{p}_i = \underline{p}+\frac{\bar{p}-\underline{p}}{2N} + (i-1)\frac{\bar{p}-\underline{p}}{N}$, where $\underline{p}$ and $\bar{p}$ denote the extrema of the set that is currently being sampled (for the dynamic scheme $\underline{p}$ and $\bar{p}$ will move closer together over time). For this sampling scheme, we can guarantee~\eqref{eq:dynamic-sampling} with $\rho(\Delta_m,N) = \frac{\Delta_m}{N}$, as the distance between the true parameter and the nearest sample never exceeds half the distance between neighbouring samples. This also guarantees~\eqref{eq:sampling-scheme} for the static sampling since $\rho(\Delta_0,N)\rightarrow 0$ as $N\rightarrow\infty$. We simulate both the static and the dynamic schemes with design parameters $\lambda=0.995$, $N_{pe} = M_d = 1\cdot 10^{3}$ (which corresponds to $10$ seconds) and $\alpha = 0.8$. The resulting parameter estimate and norm of the state error are shown in Fig.~\ref{fig:results} together with the shrinking parameter set and estimated noise level $\upsilon_{\tilde{p}}(\Delta_v)+\omega_{\tilde{p}}(\Delta_w)$ with $\Delta_v=\Delta_w=0.01$. 

Figure~\ref{fig:results} shows that for both schemes the parameter estimates as well as the state estimate converge within a certain margin of their true value. As can be seen in Fig.~\ref{fig:results}, the first resampling occurs after $10$ seconds, which causes the parameter estimate to jump. The spikes in the parameter estimation error at the switching instances are a result of the monitoring signals being reset, which may cause the supervisor to select a "suboptimal" observer temporarily. Figure~\ref{fig:results} also shows that the estimates do not necessarily become more accurate after individual zoom-ins, which is explained by the fact that if one parameter sample happens to be very accurate, it is not necessarily preserved during the resampling. It should be noted that the number of observers $N=10$ used here is significantly less than the theoretical estimates. To be more specific, our estimates dictate that at least $N^\star=357$ observers are required in the static sampling policy to guarantee that the parameter converges to within $\left\|\tilde{p}_k\right\| \leqslant 3+\upsilon_{\tilde{p}}(\Delta_v)+\omega_{\tilde{p}}(\Delta_w)$. However, the simulations show that this estimate is conservative and that the margin is already achieved for $N=10$. For the dynamic sampling policy, the estimated required number of observers decreases to $N^\star=94$, which is still conservative, however, it confirms that the dynamic scheme requires fewer observers to guarantee similar accuracy.

\section{Conclusions}\label{sec:conclusions}
In this paper, we presented two schemes to jointly estimate parameters and states of discrete-time nonlinear systems in the presence of bounded noise. The first scheme utilizes a static sampling policy and the second scheme uses a dynamic sampling policy. For both schemes, convergence guarantees are provided, which also show that the dynamic scheme typically requires lower computational effort. These results were illustrated by means of a numerical example.

Future work is directed towards obtaining, in an easy and non-conservative manner, the estimates for the required number of observers, time until convergence and minimum time between subsequent zoom-ins on the parameter space needed to get the guarantees as provided in our main theorems. One concrete direction could be the DIRECT algorithm proposed in~\cite{Chong2017}, which eliminates the need to estimate the number of observers a-priori and can be extended to the noisy case to overcome one of these drawbacks. Obtaining non-conservative estimates of the contribution of the noise on the convergence margins is another important research topic. Extending the framework to stochastic noise assumptions may improve performance at the cost of not having worst-case convergence guarantees. Finally, allowing for slowly time-varying parameters, such as in~\cite{Cuevas2019}, in our estimation framework is an interesting future research direction.

\section*{Appendix}
\begin{proof}[Proof of Theorem~\ref{thm:static-sampling}]
	To prove Theorem~\ref{thm:static-sampling}, the following lemma is useful.
\begin{lemma}\label{lem:ISS}
	Consider the system~\eqref{eq:system} and the state estimation error system~\eqref{eq:error-system} under Assumptions~\ref{ass:bounded-sequences}-\ref{ass:iss-lyap}. For any $\Delta_{\tilde{x}},\Delta_x,\Delta_u,\Delta_v,\Delta_w\in\mathbb{R}_{\geqslant 0}$, there exist functions $\beta\in\pazocal{KL}$ and $\gamma_{\tilde{p}},\gamma_{v},\gamma_w\in\pazocal{K}_{\infty}$ such that for any $\tilde{p}_i\in\mathbb{D}$, $i\in\pazocal{N}$, $\tilde{x}_{i,0}\in\mathbb{B}_{\Delta_{\tilde{x}}}^{n_x}$, $p\in\mathbb{P}$, $x_0\in\mathbb{B}_{\Delta_x}^{n_x}$, $\left\{v_k\right\}_{k\in\mathbb{N}}$ with $v_k\in\mathbb{B}_{\Delta_v}^{n_v}$, $\left\{w_k\right\}_{k\in\mathbb{N}}$ with $w_k\in\mathbb{B}_{\Delta_w}^{n_w}$ and $\left\{u_k\right\}_{k\in\mathbb{N}}$ with $u_k\in\mathbb{B}_{\Delta_u}^{n_u}$, $k\in\mathbb{N}$, the corresponding solutions satisfy for $k\in\mathbb{N}$
	\begin{align}
		\left\|\tilde{x}_{i,k}\right\| &\leqslant \beta\left(\left\|\tilde{x}_{i,0}\right\|,k\right) + \gamma_{\tilde{p}}\left(\left\|\tilde{p}_i\right\|\right)\nonumber\\
		&\quad + \gamma_v\left(\left\|\left\{v_j\right\}\right\|\right) + \gamma_{w}\left(\left\|\left\{w_j\right\}\right\|\right).
		\label{eq:ISS}
	\end{align}
\end{lemma}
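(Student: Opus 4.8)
The plan is to read \eqref{eq:sandwich-bounds}--\eqref{eq:diffV} as a (local) discrete-time ISS-Lyapunov characterization of the error system \eqref{eq:error-system} and to derive \eqref{eq:ISS} by the standard comparison argument. The only nonstandard ingredient is that the ``input'' term $\sigma$ in \eqref{eq:diffV} depends also on the state $x$ and input $u$, and must first be dominated by gains in $\|\tilde{p}\|$, $\|v\|$ and $\|w\|$ alone.

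First I would compactify. By Assumption~\ref{ass:bounded-state} there is $K_x=K_x(\Delta_x,\Delta_u,\Delta_v,\Delta_w)$ with $x_k\in\mathbb{B}_{K_x}^{n_x}$ for all $k$, while $u_k\in\mathbb{B}_{\Delta_u}^{n_u}$, $v_k\in\mathbb{B}_{\Delta_v}^{n_v}$, $w_k\in\mathbb{B}_{\Delta_w}^{n_w}$ and $\tilde{p}_i\in\mathbb{D}$ all lie in compact sets. On the resulting compact product set $\sigma$ is continuous and satisfies $\sigma(0,0,0,x,u)=0$, so majorizing it first by a nondecreasing continuous function of $\|(\tilde{p},v,w)\|$ that vanishes at the origin, then by some $\hat{\sigma}\in\pazocal{K}_\infty$, and splitting via $\hat{\sigma}(a+b+c)\leqslant\hat{\sigma}(3a)+\hat{\sigma}(3b)+\hat{\sigma}(3c)$, yields $\bar{\sigma}_{\tilde{p}},\bar{\sigma}_v,\bar{\sigma}_w\in\pazocal{K}_\infty$ with
\begin{equation*}
	\sigma(\tilde{p},v,w,x,u)\leqslant\bar{\sigma}_{\tilde{p}}(\|\tilde{p}\|)+\bar{\sigma}_v(\|v\|)+\bar{\sigma}_w(\|w\|)
\end{equation*}
on that set. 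This is exactly the step that renders the estimate \emph{local}: the gains inherit the dependence on $\Delta_x,\Delta_u,\Delta_v,\Delta_w$ through $K_x$.

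Second, writing $V_k\coloneqq V(\hat{p}_i,\tilde{x}_{i,k})$ and using \eqref{eq:sandwich-bounds} in the form $\|\tilde{x}_{i,k}\|\geqslant\alpha_2^{-1}(V_k)$, inequality \eqref{eq:diffV} gives the scalar recursion $V_{k+1}\leqslant V_k-\bar{\alpha}_3(V_k)+\delta$, where $\bar{\alpha}_3\coloneqq\alpha_3\circ\alpha_2^{-1}\in\pazocal{K}_\infty$ and $\delta\coloneqq\bar{\sigma}_{\tilde{p}}(\|\tilde{p}_i\|)+\bar{\sigma}_v(\|\{v_j\}\|)+\bar{\sigma}_w(\|\{w_j\}\|)$ upper-bounds the per-step disturbance. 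Since $V\geqslant 0$, evaluating \eqref{eq:diffV} at $\tilde{p}=v=w=0$ (where $\sigma=0$) forces $\alpha_3\leqslant\alpha_2$, hence $\bar{\alpha}_3\leqslant\mathrm{id}$ without loss of generality. The standard discrete-time comparison lemma (cf.~\cite{Jiang2001}) then produces $\hat{\beta}\in\pazocal{KL}$ and $\hat{\gamma}\in\pazocal{K}_\infty$ with $V_k\leqslant\hat{\beta}(V_0,k)+\hat{\gamma}(\delta)$: while $V_k\geqslant\bar{\alpha}_3^{-1}(2\delta)$ one has $V_{k+1}\leqslant V_k-\tfrac12\bar{\alpha}_3(V_k)$, captured by $\hat{\beta}$, and once $V_k$ drops below that level it remains bounded by $\bar{\alpha}_3^{-1}(2\delta)+\delta\eqqcolon\hat{\gamma}(\delta)$.

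Finally I would translate back via \eqref{eq:sandwich-bounds}: from $\alpha_1(\|\tilde{x}_{i,k}\|)\leqslant V_k$ and $V_0\leqslant\alpha_2(\|\tilde{x}_{i,0}\|)$, and using $\alpha_1^{-1}(a+b)\leqslant\alpha_1^{-1}(2a)+\alpha_1^{-1}(2b)$ together with the analogous additive splitting of $\hat{\gamma}(\delta)$ into its three contributions, one recovers \eqref{eq:ISS} with $\beta(r,k)\coloneqq\alpha_1^{-1}(2\hat{\beta}(\alpha_2(r),k))\in\pazocal{KL}$ and $\gamma_{\tilde{p}},\gamma_v,\gamma_w\in\pazocal{K}_\infty$ taken as the corresponding compositions of $\alpha_1^{-1}$, $\hat{\gamma}$ and $\bar{\sigma}_{\tilde{p}}$, $\bar{\sigma}_v$, $\bar{\sigma}_w$. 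I expect the main obstacle to be the first step, namely extracting disturbance-only $\pazocal{K}_\infty$ gains from the $(x,u)$-dependent $\sigma$; this is precisely where Assumptions~\ref{ass:bounded-sequences} and~\ref{ass:bounded-state} are indispensable and why the estimate is only local. Steps two through four are the routine ISS-Lyapunov-to-estimate machinery.
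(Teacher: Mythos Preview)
Your proposal is correct and follows essentially the same route as the paper: compactify via Assumptions~\ref{ass:bounded-sequences} and~\ref{ass:bounded-state} to dominate the $(x,u)$-dependent $\sigma$ by a $\pazocal{K}_\infty$-function of $\|(\tilde p,v,w)\|$ alone, invoke the discrete-time ISS-Lyapunov characterization of~\cite{Jiang2001}, and split the resulting gain via the triangle inequality for $\pazocal{K}_\infty$-functions. The only cosmetic difference is that the paper keeps the combined disturbance $(\tilde p,v,w)$ through the comparison step and splits once at the very end (setting $\gamma_{\tilde p}(r)=\gamma_v(r)=\gamma_w(r)=\gamma(3r)$), whereas you split before and after; both orderings work.
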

\begin{proof}[Proof of Lemma~\ref{lem:ISS}]
Given $\Delta_{\tilde{x}},\Delta_x,\Delta_u,\Delta_v,\Delta_w\in\mathbb{R}_{\geqslant 0}$, by Assumption~\ref{ass:bounded-state} there exists $K_x\in\mathbb{R}_{>0}$ such that $x_k\in\mathbb{B}_{K_x}^{n_x}$ for all $k\in\mathbb{N}$. Let $V\,:\,\mathbb{D}\times\mathbb{R}^{n_x}\rightarrow\mathbb{R}_{\geq 0}$, $\sigma\,:\,\mathbb{D}\times\mathbb{R}^{n_v}\times\mathbb{R}^{n_w}\times\mathbb{R}^{n_x}\times\mathbb{R}^{n_u}\rightarrow\mathbb{R}_{\geq 0}$ and $\alpha_1,\alpha_2,\alpha_3\in\pazocal{K}_\infty$ be the functions introduced in Assumption~\ref{ass:iss-lyap}. Since $\sigma$ is continuous and non-negative with $\sigma\left(0,0,0,x,u\right)=0$ for all $x\in\mathbb{R}^{n_x}$ and $u\in\mathbb{R}^{n_u}$, the function $\sigma$ can always be upper bounded by a continuous positive-definite function $\tilde{\sigma}\,:\,\mathbb{D}\times\mathbb{R}^{n_v}\times\mathbb{R}^{n_w}\rightarrow\mathbb{R}_{\geq 0}$ which satisfies $\tilde{\sigma}\left(\tilde{p},v,w\right) \geqslant \max_{x\in\mathbb{B}_{K_x}^{n_x},u\in\mathbb{B}_{\Delta_u}^{n_u}}\sigma\left(\tilde{p},v,w,x,u\right)$ where the inequality accounts for the fact that $\max_{x\in\mathbb{B}_{K_x}^{n_x},u\in\mathbb{B}_{\Delta_u}^{n_u}}\sigma\left(\tilde{p},v,w,x,u\right)$ is not necessarily positive definite. By~\cite[Lemma 3.5]{Khalil1996} and since all norms are equivalent on finite-dimensional vector spaces, there exists $\bar{\sigma}\in\pazocal{K}_{\infty}$ such that
\begin{equation}
	\bar{\sigma}\left(\left\|\left(\tilde{p},v,w\right)\right\|\right)\geqslant \tilde{\sigma}\left(\tilde{p},v,w\right).
	\label{eq:Kinf-supply}
\end{equation}

Substitute~\eqref{eq:Kinf-supply} in~\eqref{eq:diffV} to find $
		V\left(\hat{p},\tilde{x}^+\right) - V\left(\hat{p},\tilde{x}\right)\leqslant -\alpha_3\left(\left\|\tilde{x}\right\|\right) + \bar{\sigma}\left(\left\|\left(\tilde{p},v,w\right)\right\|\right)$. Consider the sequence $\left\{\left(\tilde{p}_i,v_j,w_j\right)\right\}_{j\in\mathbb{N}}$ with $\left(\tilde{p}_i,v_j,w_j\right)\in\mathbb{R}^{n_p+n_v+n_w}$, by~\cite[Lemma 3.5]{Jiang2001}, there exist $\beta\in\pazocal{KL}$ and $\gamma\in\pazocal{K}_{\infty}$ such that
\begin{equation}
	\left\|\tilde{x}_{i,k}\right\| \leqslant \beta\left(\left\|\tilde{x}_{i,0}\right\|,k\right) + \gamma\left(\left\|\left\{\left(\tilde{p}_i,v_j,w_j\right)\right\}\right\|\right).
\end{equation}
Using the triangle inequality for $\pazocal{K}_\infty$-functions~\cite{Khalil1996} and $\left\|\left\{\left(\tilde{p}_i,v_j,w_j\right\}\right)\right\|\leqslant \left\|\tilde{p}_i\right\| + \left\|\left\{v_j\right\}\right\| + \left\|\left\{w_j\right\}\right\|$, we obtain~\eqref{eq:ISS} with $\gamma_{\tilde{p}}\left(r\right)=\gamma_v\left(r\right)=\gamma_w\left(r\right) = \gamma\left(3r\right)$.
\end{proof}

To show that the state estimation error is bounded, i.e., there exists $K_{\tilde{x}}\in\mathbb{R}_{>0}$ such that $\tilde{x}_{i,k}\in\mathbb{B}_{K_{\tilde{x}}}^{n_x}$ for $k\in\mathbb{N}$, recall that $\tilde{p}_i\in\mathbb{D}$, $i\in\pazocal{N}$, with $\mathbb{D}$ being compact and, hence, there exists a constant $\Delta\in\mathbb{R}_{>0}$ such that $\mathbb{D}\subseteq\mathbb{B}_{\Delta}^{n_p}$. Let $\Delta_{\tilde{x}},\Delta_x,\Delta_u,\Delta_v,\Delta_w\in\mathbb{R}_{\geqslant 0}$ be given. By Lemma~\ref{lem:ISS}, for all $\tilde{p}_i\in\mathbb{D}$, $i\in\pazocal{N}$, $\tilde{x}_{i,0}\in\mathbb{B}_{\Delta_{\tilde{x}}}^{n_x}$, $p\in\mathbb{P}$, $x_0\in\mathbb{B}_{\Delta_x}^{n_x}$, $\left\{v_k\right\}_{k\in\mathbb{N}}$ with $v_k\in\mathbb{B}_{\Delta_v}^{n_v}$, $\left\{w_k\right\}_{k\in\mathbb{N}}$ with $w_k\in\mathbb{B}_{\Delta_w}^{n_w}$ and $\left\{u_k\right\}_{k\in\mathbb{N}}$ with $u_k\in\mathbb{B}_{\Delta_u}^{n_u}$, $k\in\mathbb{N}$, the solutions to~\eqref{eq:system} and~\eqref{eq:error-system} satisfy
\begin{equation}
		\left\|\tilde{x}_{i,k}\right\| \leqslant \beta\left(\Delta_{\tilde{x}},0\right) + \gamma_{\tilde{p}}\left(\Delta\right) +\gamma_v\left(\Delta_v\right) + \gamma_w\left(\Delta_w\right),
	\label{eq:bound-x-tilde}%
\end{equation}
with $\beta\in\pazocal{KL}$ and $\gamma_{\tilde{p}},\gamma_v,\gamma_w\in\pazocal{K}_\infty$. Thus, $\tilde{x}_{i,k}\in\mathbb{B}_{K_{\tilde{x}}}^{n_x}$ for all $k\in\mathbb{N}$ with $K_{\tilde{x}}\coloneqq \beta\left(\Delta_{\tilde{x}},0\right) + \gamma_{\tilde{p}}\left(\Delta\right)+\gamma_v\left(\Delta_v\right) + \gamma_w\left(\Delta_w\right)$. 

The following lemma is used in proving Theorem~\ref{thm:static-sampling}.
\begin{lemma}\label{lem:monitoring-bounds}
		Consider the system~\eqref{eq:system}, the state estimation error system~\eqref{eq:error-system}, the monitoring signals~\eqref{eq:monitoring-signals} and Assumptions~\ref{ass:bounded-sequences}-\ref{ass:PE}. For any $\Delta_{\tilde{x}},\Delta_x,\Delta_u,\Delta_v,\Delta_w\in\mathbb{R}_{\geqslant 0}$ and $\epsilon\in\mathbb{R}_{>0}$, there exist functions $\underline{\chi},\bar{\chi}_{\tilde{p}},\bar{\chi}_{v},\bar{\chi}_w\in\pazocal{K}_\infty$ and an integer $M\in\mathbb{N}_{\geqslant 1}$ such that for any $\tilde{p}_i\in\mathbb{D}$, $i\in\pazocal{N}$, $\tilde{x}_{i,0}\in\mathbb{B}_{\Delta_{\tilde{x}}}^{n_x}$, $p\in\mathbb{P}$, $x_0\in\mathbb{B}_{\Delta_x}^{n_x}$, $\left\{v_k\right\}_{k\in\mathbb{N}}$ with $v_k\in\mathbb{B}_{\Delta_v}^{n_v}$, $\left\{w_k\right\}_{k\in\mathbb{N}}$ with $w_k\in\mathbb{B}_{\Delta_w}^{n_w}$ and for some input sequence $\left\{u_k\right\}_{k\in\mathbb{N}}$ with $u_k\in\mathbb{B}_{\Delta_u}^{n_u}$ that satisfies Assumption~\ref{ass:PE}, $k\in\mathbb{N}$, the monitoring signals~\eqref{eq:monitoring-signals} satisfy for all $k\in\mathbb{N}_{\geqslant M}$
	\begin{align}
		\label{eq:monitoring-bounds}
		\underline{\chi}\left(\left\|\tilde{p}_i\right\|\right) \leqslant \mu_{i,k} &\leqslant \epsilon + \bar{\chi}_{\tilde{p}}\left(\left\|\tilde{p}_i\right\|\right) + \bar{\chi}_v\left(\left\|\left\{v_j\right\}\right\|\right)\nonumber\\
		&\quad + \bar{\chi}_w\left(\left\|\left\{w_j\right\}\right\|\right).
	\end{align}
\end{lemma}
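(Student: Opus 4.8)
The plan is to establish the lower and upper bounds in~\eqref{eq:monitoring-bounds} separately: the lower bound follows directly from the persistence-of-excitation condition in Assumption~\ref{ass:PE}, while the upper bound combines the ISS estimate of Lemma~\ref{lem:ISS} with the local Lipschitz continuity of $h$.

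For the lower bound, I would discard all but the most recent $N_{\mathrm{pe}}$ terms of the discounted sum~\eqref{eq:monitoring-signals}. Since $\lambda\in[0,1)$, each retained weight obeys $\lambda^{k-1-j}\geqslant\lambda^{N_{\mathrm{pe}}-1}$ for $j\in\mathbb{N}_{[k-N_{\mathrm{pe}},k-1]}$, whence $\mu_{i,k}\geqslant\lambda^{N_{\mathrm{pe}}-1}\sum_{j=k-N_{\mathrm{pe}}}^{k-1}\|\tilde{y}_{i,j}\|^2$ for $k\geqslant N_{\mathrm{pe}}$. Applying Assumption~\ref{ass:PE} then yields $\underline{\chi}(\|\tilde{p}_i\|)\leqslant\mu_{i,k}$ with $\underline{\chi}(\cdot)\coloneqq\lambda^{N_{\mathrm{pe}}-1}\alpha_{\tilde{y}}(\cdot)\in\pazocal{K}_\infty$, provided $M\geqslant N_{\mathrm{pe}}$.

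For the upper bound, I would first observe that, by Assumption~\ref{ass:bounded-state}, the bound $\tilde{x}_{i,k}\in\mathbb{B}_{K_{\tilde{x}}}^{n_x}$ established earlier, and Assumptions~\ref{ass:bounded-sequences}-\ref{ass:compact-parameter-set}, all arguments of $h$ stay in a fixed compact set, so that local Lipschitz continuity of $h$ gives a constant $L>0$ with $\|\tilde{y}_{i,j}\|\leqslant L(\|\tilde{x}_{i,j}\|+\|\tilde{p}_i\|+\|w_j\|)$. Squaring, using $(a+b+c)^2\leqslant 3(a^2+b^2+c^2)$, and substituting the ISS estimate~\eqref{eq:ISS} for $\|\tilde{x}_{i,j}\|$, I would split the latter into its transient part $\beta(\|\tilde{x}_{i,0}\|,j)$ and its steady-state part $\gamma_{\tilde{p}}(\|\tilde{p}_i\|)+\gamma_v(\|\{v_j\}\|)+\gamma_w(\|\{w_j\}\|)$. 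Inserting this into~\eqref{eq:monitoring-signals} and bounding the geometric weights by $\sum_{j=0}^{k-1}\lambda^{k-1-j}\leqslant(1-\lambda)^{-1}$, all steady-state contributions (together with the $\|\tilde{p}_i\|$ and $\|w_j\|$ terms from the Lipschitz bound, the latter via $\sum_{j=0}^{k-1}\lambda^{k-1-j}\|w_j\|^2\leqslant(1-\lambda)^{-1}\|\{w_j\}\|^2$) collect into three $\pazocal{K}_\infty$ functions $\bar{\chi}_{\tilde{p}},\bar{\chi}_v,\bar{\chi}_w$ of $\|\tilde{p}_i\|$, $\|\{v_j\}\|$ and $\|\{w_j\}\|$, respectively.

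The main obstacle is the transient term $\sum_{j=0}^{k-1}\lambda^{k-1-j}\beta(\|\tilde{x}_{i,0}\|,j)^2$, which I must render below $\epsilon$ uniformly in $i$ for all $k\geqslant M$. I would split the sum at an index $j_0$: on $\{j_0,\dots,k-1\}$ the decaying factor $\beta(\Delta_{\tilde{x}},j)^2\leqslant\beta(\Delta_{\tilde{x}},j_0)^2$ is made small by taking $j_0$ large (using $\beta\in\pazocal{KL}$ and $\|\tilde{x}_{i,0}\|\leqslant\Delta_{\tilde{x}}$), while on $\{0,\dots,j_0-1\}$ the weight satisfies $\lambda^{k-1-j}\leqslant\lambda^{k-j_0}$ and hence decays geometrically in $k$. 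Choosing first $j_0$ and then $M\geqslant\max\{j_0,N_{\mathrm{pe}}\}$ large enough makes each of the two pieces at most $\epsilon/2$, which completes the upper bound and hence the proof.
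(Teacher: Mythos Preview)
Your proposal is correct and follows essentially the same approach as the paper: the lower bound via truncation to the last $N_{\mathrm{pe}}$ terms and Assumption~\ref{ass:PE}, and the upper bound via the Lipschitz estimate on $H$, the ISS bound of Lemma~\ref{lem:ISS}, and the same two-part splitting of the transient sum $\sum_{j}\lambda^{k-1-j}\beta(\Delta_{\tilde{x}},j)^2$ at a cutoff index. The only cosmetic differences are that the paper uses the weighted Young inequality $(a+b+c)^2\leqslant(1+\eta_1^{-1})(1+\eta_2^{-1})a^2+(1+\eta_1)b^2+(1+\eta_1^{-1})(1+\eta_2)c^2$ instead of your factor~$3$, and makes the constants in the splitting argument explicit.
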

\begin{proof}[Proof of Lemma~\ref{lem:monitoring-bounds}]
	Let $N_{\mathrm{pe}}\in\mathbb{N}_{\geqslant 1}$ and $\alpha_{\tilde{y}}\in\pazocal{K}_{\infty}$ be as specified in Assumption~\ref{ass:PE}. It holds that
	\begin{equation}
		\sum_{j=0}^{k-1}\lambda^{k-1-j}\left\|\tilde{y}_{i,j}\right\|^2 \geqslant \lambda^{N_{\mathrm{pe}}-1}\sum_{j=k-N_{\mathrm{pe}}}^{k-1}\left\|\tilde{y}_{i,j}\right\|^2.
		\label{eq:ineq-sum}
	\end{equation}
	By substituting~\eqref{eq:ineq-sum} into~\eqref{eq:monitoring-signals} and using Assumption~\ref{ass:PE}, the lower bound in~\eqref{eq:monitoring-bounds} is obtained with $\underline{\chi}\left(r\right)\coloneqq \lambda^{N_{\mathrm{pe}}-1}\alpha_{\tilde{y}}\left(\left\|\tilde{p}_i\right\|\right)$ for all $r\geqslant 0$.
	
	The upper bound in~\eqref{eq:monitoring-bounds} is obtained as follows. Substitution of~\eqref{eq:error-system-output} in~\eqref{eq:monitoring-signals} yields $\mu_{i,k} = \sum_{j=0}^{k-1}\lambda^{k-1-j}\left\|H\left(\tilde{x}_{i,j},x_j,\tilde{p}_i,p,u_j,w_j\right)\right\|^2$, $i\in\pazocal{N}$, for all $k\in\mathbb{N}$. Recall that $\tilde{x}_{i,k}\in\mathbb{B}_{K_x}^{n_x}$ for all $k\in\mathbb{N}$ as shown in~\eqref{eq:bound-x-tilde} and that $h$ is locally Lipschitz. Hence, there exist $\ell_{\tilde{x}},\ell_{\tilde{p}},\ell_{w}\in\mathbb{R}_{>0}$ such that for all $x\in\mathbb{B}_{K_x}^{n_x}$, $\tilde{x}\in\mathbb{B}_{K_{\tilde{x}}}^{n_x}$, $\tilde{p}\in\mathbb{D}$, $p\in\mathbb{P}$, $u\in\mathbb{B}_{\Delta_u}^{n_u}$ and $w\in\mathbb{B}_{\Delta_w}^{n_w}$, it holds that $\left\|H\left(\tilde{x},x,\tilde{p},p,u,w\right)\right\| = \left\|h\left(\tilde{x}+x,\tilde{p}+p,u,0\right) - h\left(x,p,u,w\right)\right\| \leqslant \ell_{\tilde{x}}\left\|\tilde{x}\right\| + \ell_{\tilde{p}}\left\|\tilde{p}\right\| + \ell_w\left\|w\right\|$. Since $\left(a+b+c\right)^2\leq \left(1+\eta_1^{-1}\right)\left(1+\eta_2^{-1}\right)a^2 + \left(1+\eta_1\right)b^2 + \left(1+\eta_1^{-1}\right)\left(1+\eta_2\right)c^2$, for any $a,b,c,\eta_1,\eta_2\in\mathbb{R}_{>0}$, the monitoring signals satisfy
	\begin{subequations}
		\begin{alignat}{2}
			\mu_{i,k} &\leqslant \sum_{j=0}^{k-1} \lambda^{k-1-j}\left(L_{\tilde{x}}\left\|\tilde{x}_{i,j}\right\|^2+ L_{\tilde{p}}\left\|\tilde{p}_i\right\|^2 + L_w\left\|w_j\right\|^2\right),\nonumber\\
			&\leqslant \frac{L_{\tilde{p}}\left\|\tilde{p}_i\right\|^2+L_w\left\|\left\{w_j\right\}\right\|^2}{1-\lambda}+\sum_{j=0}^{k-1}\lambda^{k-1-j}L_{\tilde{x}}\left\|\tilde{x}_{i,j}\right\|^2,\nonumber
		\end{alignat}
	\end{subequations}
	where $L_{\tilde{x}} = (1+\eta_1^{-1})(1+\eta_2^{-1})l_{\tilde{x}}^2$, $L_{\tilde{p}} = (1+\eta_1)l_{\tilde{p}}^2$ and $L_w = (1+\eta_1^{-1})(1+\eta_2)l_w^2$ with $\eta_1,\eta_2\in\mathbb{R}_{>0}$. Applying Lemma~\ref{lem:ISS}, it holds for all $\tilde{p}_i\in\mathbb{D}$, $i\in\pazocal{N}$, $\tilde{x}_{i,0}\in\mathbb{B}_{\Delta_{\tilde{x}}}^{n_x}$, $p\in\mathbb{P}$, $x_0\in\mathbb{B}_{\Delta_x}^{n_x}$, $\left\{v_k\right\}_{k\in\mathbb{N}}$ with $v_k\in\mathbb{B}_{\Delta_v}^{n_v}$, $\left\{w_k\right\}_{k\in\mathbb{N}}$ with $w_k\in\mathbb{B}_{\Delta_w}^{n_w}$ and $\left\{u_k\right\}_{k\in\mathbb{N}}$ with $u_k\in\mathbb{B}_{\Delta_u}^{n_u}$, $k\in\mathbb{N}$, that
	\begin{align}
		\mu_{i,k} &\leqslant \frac{1}{1-\lambda}\left(L_{\tilde{p}}\left\|\tilde{p}_i\right\|^2 + L_w\left\|\left\{w_j\right\}\right\|^2\right.\nonumber\\
			&\quad + \left. 4L_{\tilde{x}}\left(\gamma_{\tilde{p}}^2\left(\left\|\tilde{p}_i\right\|\right)+ \gamma_v^2\left(\left\|\left\{v_j\right\}\right\|\right) + \gamma_w^2\left(\left\|\left\{w_j\right\}\right\|\right)\right)\right)\nonumber\\
			&\quad + 4L_{\tilde{x}}\sum_{j=0}^{k-1}\lambda^{k-1-j}\beta^2\left(\Delta_{\tilde{x}},j\right).
			\label{eq:mon-up-bnd}
	\end{align}
	
	Given $\epsilon\in\mathbb{R}_{>0}$, choose $\epsilon_{\tilde{x}},\epsilon_{\mu}\in\mathbb{R}_{>0}$ sufficiently small such that $4L_{\tilde{x}}(\epsilon_{\mu} + \frac{\epsilon_{\tilde{x}}}{1-\lambda}) \leqslant \epsilon$ and let $N_{\tilde{x}}\in\mathbb{N}$ be sufficiently large such that $\beta^2\left(\Delta_{\tilde{x}},k\right)\leqslant \epsilon_{\tilde{x}}$ for all $k\in\mathbb{N}_{\geqslant N_{\tilde{x}}}$. Moreover, take $N_{\mu}\in\mathbb{N}_{\geqslant N_{\tilde{x}}}$ sufficiently large such that $\lambda^{k-N_{\tilde{x}}}\frac{1-\lambda^{N_{\tilde{x}}}}{1-\lambda}\beta^2\left(\Delta_{\tilde{x}},0\right)\leqslant \epsilon_{\mu}$ for all $k\in\mathbb{N}_{\geqslant N_{\mu}}$. It follows that for $k\in\mathbb{N}_{\geqslant N_{\tilde{x}}}$
	\begin{align}
		&\sum_{j=0}^{k-1}\lambda^{k-1-j}\beta^2\left(\Delta_{\tilde{x}},j\right) \leqslant \sum_{j=0}^{N_{\tilde{x}}-1} \lambda^{k-1-j}\beta^2\left(\Delta_{\tilde{x}},j\right) + \frac{\epsilon_{\tilde{x}}}{1-\lambda},\nonumber\\
		&\leqslant \frac{\left(\lambda^{k-N_{\tilde{x}}}-\lambda^k\right)\beta^2\left(\Delta_{\tilde{x}},0\right)+\epsilon_{\tilde{x}}}{1-\lambda} \leqslant \frac{\epsilon}{4L_{\tilde{x}}},
	\end{align}
	which is substituted in~\eqref{eq:mon-up-bnd} to obtain the upper bound in~\eqref{eq:monitoring-bounds} with $\bar{\chi}_{\tilde{p}}(r)\coloneqq \frac{1}{1-\lambda}(L_{\tilde{p}}r^2 + 4L_{\tilde{x}}\gamma^2_{\tilde{p}}(r))$, $\bar{\chi}_v(r)\coloneqq \frac{1}{1-\lambda}4L_{\tilde{x}}\gamma^2_v(r)$ and $\bar{\chi}_w(r)\coloneqq \frac{1}{1-\lambda}(L_wr^2 + 4L_{\tilde{x}}\gamma^2_w(r))$ for $r\in\mathbb{R}_{\geqslant 0}$. Finally, it can be concluded that~\eqref{eq:monitoring-bounds} holds for $k\in\mathbb{N}_{\geqslant M}$ with $M\coloneqq\max\left\{N_{\mathrm{pe}},N_{\mu}\right\}$.
\end{proof}

Define $\nu\coloneqq\min\{\frac{1}{3}\nu_{\tilde{p}},\frac{1}{9}\gamma_{\tilde{p}}^{-1}\left(\nu_{\tilde{x}}\right)\}$ where $\gamma_{\tilde{p}}$ is the $\pazocal{K}_{\infty}$-function from Lemma~\ref{lem:ISS} and let $\underline{\chi},\bar{\chi}_{\tilde{p}},\bar{\chi}_v,\bar{\chi}_w\in\pazocal{K}_{\infty}$ be the functions from Lemma~\ref{lem:monitoring-bounds}. Consider (one of) the observer(s) with the smallest parameter estimation error and let $i^\star$ denote its index, i.e., $i^\star \coloneqq \arg\min_{i\in\pazocal{N}} \left\|\tilde{p}_i\right\|$. By definition of the selection criterion~\eqref{eq:selection-criterion}, it holds that $\mu_{\pi_k,k}\leqslant \mu_{i^\star,k}$ for all $k\in\mathbb{N}$ and, hence, by applying Lemma~\ref{lem:monitoring-bounds}, there exists $M\in\mathbb{N}_{\geqslant 1}$ such that, for all $k\in\mathbb{N}_{\geqslant M}$, $\underline{\chi}\left(\left\|\tilde{p}_{\pi_k}\right\|\right) \leqslant \mu_{\pi_k,k} \leqslant \epsilon + \bar{\chi}_{\tilde{p}}\left(\left\|\tilde{p}_i\right\|\right)+ \bar{\chi}_v\left(\left\|\left\{v_j\right\}\right\|\right) + \bar{\chi}_w\left(\left\|\left\{w_j\right\}\right\|\right)$. Here, $\epsilon\in(0,\underline{\chi}(\nu))$ is a design parameter which is chosen by the user. Thus, for all $k\in\mathbb{N}_{\geqslant M}$ it holds that
\begin{equation}
	\left\|\tilde{p}_{\pi_k}\right\| \leqslant \underline{\chi}^{-1}\left(\epsilon+\bar{\chi}_{\tilde{p}}\left(\left\|\tilde{p}_i\right\|\right) + \bar{\chi}_v\left(\left\|\left\{v_j\right\}\right\|\right)+\bar{\chi}_w\left(\left\|\left\{w_j\right\}\right\|\right)\right).
	\label{eq:rand}
\end{equation}
Employ a sampling scheme that satisfies~\eqref{eq:sampling-scheme} to sample the parameter space $\mathbb{P}$. Since $\nu>0$ and $\epsilon<\underline{\chi}\left(\nu\right)$, it is possible to choose the number of samples/observers sufficiently large such that, for any $p\in\mathbb{P}$, it is guaranteed that
\begin{equation}
	\left\|\tilde{p}_{i^\star}\right\| \leqslant \bar{\chi}_{\tilde{p}}^{-1}\left(\underline{\chi}\left(\nu\right) - \epsilon\right).
	\label{eq:sampling-density}
\end{equation}
Let $N^\star\in\mathbb{N}_{\geqslant 1}$ be sufficiently large to ensure~\eqref{eq:sampling-density} for any $N\in\mathbb{N}_{\geqslant N^{\star}}$. By substitution of~\eqref{eq:sampling-density} in~\eqref{eq:rand} and the triangle inequality for $\pazocal{K}_{\infty}$-functions~\cite{Khalil1996}, we find that
\begin{equation}
	\left\|\tilde{p}_{\pi_k}\right\| \leqslant 3\nu + \upsilon_{\tilde{p}}\left(\left\|\left\{v_j\right\}\right\|\right) + \omega_{\tilde{p}}\left(\left\|\left\{w_j\right\}\right\|\right),\label{eq:param-bound}
\end{equation}
for any $N\in\mathbb{N}_{\geqslant N^\star}$. By substituting the definition of $\nu$, we obtain the convergence result for $\tilde{p}_{\pi_k}$ in~\eqref{eq:convergence-static} with $\upsilon_{\tilde{p}}\left(r\right)\coloneqq \underline{\chi}^{-1}\left(3\bar{\chi}_v\left(r\right)\right)$ and $\omega_{\tilde{p}}\left(r\right)\coloneqq \underline{\chi}^{-1}\left(3\bar{\chi}_w\left(r\right)\right)$ for $r\in\mathbb{R}_{\geqslant 0}$.

For $k\in\mathbb{N}_{\geqslant M}$, by~\eqref{eq:param-bound} the supervisor is guaranteed to select from a subset of observers that satisfies, $i\in\bar{\pazocal{N}}\subseteq\pazocal{N}$,
\begin{equation}
		\left\|\tilde{p}_i\right\| \leqslant 3\nu + \upsilon_{\tilde{p}}\left(\left\|\left\{v_j\right\}\right\|\right) + \omega_{\tilde{p}}\left(\left\|\left\{w_j\right\}\right\|\right).
\end{equation} 
By Lemma~\ref{lem:ISS} and the fact that $\nu\leqslant \frac{1}{9}\gamma_{\tilde{p}}^{-1}\left(\nu_{\tilde{x}}\right)$, we find that
	\begin{align}
			\limsup_{k\rightarrow\infty}\left\|\tilde{x}_k\right\| &\leqslant \gamma_{\tilde{p}}\left(3\nu + \upsilon_{\tilde{p}}\left(\left\|\left\{v_j\right\}\right\|\right) + \omega_{\tilde{p}}\left(\left\|\left\{w_j\right\}\right\|\right)\right)\nonumber\\
			&\quad+ \gamma_v\left(\left\|\left\{v_j\right\}\right\|\right) +\gamma_w\left(\left\|\left\{w_j\right\}\right\|\right),
	\end{align}
	from which it can be seen that~\eqref{eq:convergence-static} holds with $\upsilon_{\tilde{x}}(r)\coloneqq \gamma_{\tilde{p}}(3\underline{\chi}^{-1}(3\bar{\chi}_v(r))) + \gamma_v(r)$ and $\omega_{\tilde{x}}(r)\coloneqq \gamma_{\tilde{p}}(3\underline{\chi}^{-1}(3\bar{\chi}_w(r))) + \gamma_w(r)$ for $r\in\mathbb{R}_{\geqslant 0}$. 
\end{proof}

\begin{proof}[Proof of Theorem~\ref{thm:dynamic-sampling}]
	Let $\Delta_{\tilde{x}},\Delta_x,\Delta_u,\Delta_v,\Delta_w\in\mathbb{R}_{\geqslant 0}$, the zooming factor $\alpha\in\left(0,1\right)$ and the desired margins $\nu_{\tilde{x}},\nu_{\tilde{p}}\in\mathbb{R}_{>0}$ be given. Since $\hat{p}_{i,m}\in\mathbb{P}_m$, $i\in\pazocal{N}$ and $m\in\mathbb{N}_{\geqslant 1}$, and, by definition, $\mathbb{P}_m\subseteq\mathbb{P}_0=\mathbb{P}$, it holds that $\hat{p}_{i,m}\in\mathbb{P}$ and $\tilde{p}_{i,m}\in\mathbb{D}$ for any $i\in\pazocal{N}$ and $m\in\mathbb{N}$. Therefore, the results of Lemma~\ref{lem:ISS} and~\ref{lem:monitoring-bounds} apply and, consequently, there exists $K_{\tilde{x}}\in\mathbb{R}_{>0}$ such that $\tilde{x}_{i,k}\in\mathbb{B}_{K_{\tilde{x}}}^{n_x}$ for all $k\in\mathbb{N}$, $i\in\pazocal{N}$, as shown in~\eqref{eq:bound-x-tilde}. 
	
	Define $\bar{\nu} \coloneqq \min\{\nu_{\tilde{p}},\frac{1}{3}\gamma_{\tilde{p}}^{-1}(\nu_{\tilde{x}})\}$, consider the $\pazocal{K}_{\infty}$-functions $\underline{\chi},\bar{\chi}_{\tilde{p}},\bar{\chi}_v,\bar{\chi}_w$ from Lemma~\ref{lem:monitoring-bounds} and let $\bar{\Delta}\in\left(0,\Delta_0\right)$ be sufficiently small such that $3\bar{\chi}_{\tilde{p}}(\rho(\bar{\Delta},0)) < \underline{\chi}(\bar{\nu})\leqslant \underline{\chi}(\nu_{\tilde{p}})$, which is always possible since $\underline{\chi},\bar{\chi}_{\tilde{p}}\in\pazocal{K}_{\infty}$ and since $\rho$ is the $\pazocal{KL}$-function in~\eqref{eq:dynamic-sampling}. Choose $\epsilon\in(0,\frac{1}{3}\underline{\chi}(\alpha\bar{\Delta}))$ sufficiently small such that
	\begin{equation}
		3\bar{\chi}_{\tilde{p}}\left(\rho\left(\bar{\Delta},0\right)\right) + 3\epsilon \leqslant \underline{\chi}\left(\bar{\nu}\right) \leqslant \underline{\chi}\left(\nu_{\tilde{p}}\right),
		\label{eq:case1}
	\end{equation}
	and let $M^\star\in\mathbb{N}_{\geqslant 1}$ be sufficiently large such that the results of Lemma~\ref{lem:monitoring-bounds} hold for all $k\in\mathbb{N}_{\geqslant M^\star}$ using this particular choice of $\epsilon$. Take $N^\star\in\mathbb{N}_{\geqslant 1}$ sufficiently large such that 
	\begin{equation}
		3\bar{\chi}_{\tilde{p}}\left(\rho\left(d,N\right)\right)+3\epsilon \leqslant \underline{\chi}\left(\alpha d\right),\quad \forall d\in\left[\bar{\Delta},\Delta_0\right],
		\label{eq:case2}
	\end{equation}
	for $N\in\mathbb{N}_{\geqslant N^\star}$, which is always possible since $\epsilon<\frac{1}{3}\underline{\chi}\left(\alpha \bar{\Delta}\right)\leqslant \underline{\chi}\left(\alpha d\right)$ for all $d\in\left[\bar{\Delta},\Delta_0\right]$. By Lemma~\ref{lem:monitoring-bounds}, it holds for $M_d \geqslant \max\left\{1,M^\star\right\}$, i.e., by~\eqref{eq:intersampling-time} $k_{m+1} \geqslant k_m + \max\left\{1,M^\star\right\}$, that
	\begin{align}
		\underline{\chi}\left(\left\|\tilde{p}_{i,m}\right\|\right)\leqslant \mu_{i,k_{m+1}} &\leqslant \epsilon + \bar{\chi}_{\tilde{p}}\left(\left\|\tilde{p}_{i,m}\right\|\right)+ \bar{\chi}_v\left(\left\|\left\{v_j\right\}\right\|\right)\nonumber\\
		&\quad + \bar{\chi}_w\left(\left\|\left\{w_j\right\}\right\|\right),
		\label{eq:moni-bnd}
	\end{align}
	for $i\in\pazocal{N}$ and $m\in\mathbb{N}$. Let $i^\star_m\in\arg\min_{i\in\pazocal{N}}\left\|\tilde{p}_{i,m}\right\|$, $m\in\mathbb{N}$, denote the index of (one of) the observer(s) with the smallest parameter estimation error for $k\in\mathbb{N}_{\left[k_m,k_{m+1}-1\right]}$. By definition of $\pi_k$~\eqref{eq:selection-criterion}, it holds that $\mu_{\pi_k,k} \leqslant \mu_{i^\star_m,k}$	for all $k,m\in\mathbb{N}$ and, using~\eqref{eq:moni-bnd} and~\eqref{eq:dynamic-parameter-estimate}, it holds that
	\begin{align}
			\|\tilde{p}_{k_{m+1}}\| &\leqslant \underline{\chi}^{-1}\left(3\epsilon+3\bar{\chi}_{\tilde{p}}\left(\rho\left(\Delta_m,N\right)\right)\right) + \upsilon_{\tilde{p}}\left(\left\|\left\{v_j\right\}\right\|\right)\nonumber\\
			&\quad + \omega_{\tilde{p}}\left(\left\|\left\{w_j\right\}\right\|\right),
			\label{eq:thm2-partial}%
		\end{align}
	where $\upsilon_{\tilde{p}}\left(r\right)\coloneqq \underline{\chi}^{-1}\left(3\bar{\chi}_v\left(r\right)\right)$ and $\omega_{\tilde{p}}\left(r\right)\coloneqq \underline{\chi}^{-1}\left(3\bar{\chi}_w\left(r\right)\right)$ for $r\in\mathbb{R}_{\geqslant 0}$. Based on~\eqref{eq:thm2-partial}, two cases are distinguished. First, if $\Delta_m\leqslant \bar{\Delta}$ it follows from~\eqref{eq:case1} that $\|\tilde{p}_{k_{m+1}}\| \leqslant \bar{\nu} + \upsilon_{\tilde{p}}\left(\left\|\left\{v_j\right\}\right\|\right) + \omega_{\tilde{p}}\left(\left\|\left\{w_j\right\}\right\|\right)$. Second, if $\Delta_m > \bar{\Delta}$ it holds that $\Delta_m\in\left[\bar{\Delta},\Delta_0\right]$ and by~\eqref{eq:case2}, we have $\|\tilde{p}_{k_{m+1}}\|\leqslant \Delta_{m+1} + \upsilon_{\tilde{p}}\left(\left\|\left\{v_j\right\}\right\|\right)+ \omega_{\tilde{p}}\left(\left\|\left\{w_j\right\}\right\|\right)$, where $\Delta_{m+1}=\alpha\Delta_m$. Thus, $p\in\mathbb{P}_{m+1}$ with $\mathbb{P}_{m+1}$ as defined in~\eqref{eq:shrinking-parameter-sets}. Since $\Delta_m = \alpha^m\Delta_0\rightarrow 0$ as $m\rightarrow\infty$, there exists $\bar{M}\in\mathbb{N}_{\geqslant 1}$ such that
	\begin{equation}	
		\left\|\tilde{p}_k\right\| \leqslant \bar{\nu} +\upsilon_{\tilde{p}}\left(\left\|\left\{v_j\right\}\right\|\right)+ \omega_{\tilde{p}}\left(\left\|\left\{w_j\right\}\right\|\right),\label{eq:thm2-param-bound}
	\end{equation}
	for all $k\in\mathbb{N}_{\geqslant \bar{M}}$. Substitute $\bar{\nu}\leqslant \nu_{\tilde{p}}$ to obtain the convergence result for $\tilde{p}_k$ in~\eqref{eq:convergence-dynamic}. From Lemma~\ref{lem:ISS} it follows that $\limsup_{k\rightarrow\infty}\left\|\tilde{x}_k\right\|\leqslant \gamma_{\tilde{p}}\left(\left\|\tilde{p}_k\right\|\right) + \gamma_v\left(\left\|\left\{v_j\right\}\right\|\right) + \gamma_w\left(\left\|\left\{w_j\right\}\right\|\right)$ where $\left\|\tilde{p}_k\right\|$ satisfies~\eqref{eq:thm2-param-bound} for all $k\in\mathbb{N}_{\geqslant \bar{M}}$. Recall that $\bar{\nu} \leqslant \frac{1}{3}\gamma_{\tilde{p}}^{-1}\left(\nu_{\tilde{x}}\right)$ and $\alpha\left(a+b+c\right)\leqslant \alpha\left(3a\right) + \alpha\left(3b\right)+\alpha\left(3c\right)$ for any $\alpha\in\pazocal{K}_{\infty}$ and $a,b,c\in\mathbb{R}_{\geqslant 0}$, which leads to the convergence result for $\tilde{x}_{k}$ in~\eqref{eq:convergence-dynamic} with $\upsilon_{\tilde{x}}(r)\coloneqq \gamma_v\left(r\right) + \gamma_{\tilde{p}}\left(3\upsilon_{\tilde{p}}\left(r\right)\right)$ and $\omega_{\tilde{x}}\left(r\right)\coloneqq \gamma_w\left(r\right) + \gamma_{\tilde{p}}\left(3\omega_{\tilde{p}}\left(r\right)\right)$, $r\in\mathbb{R}_{\geqslant 0}$.
\end{proof}

\begin{proof}[Proof of Proposition~\ref{prop:circle-criterion-observer}]
	By~\eqref{eq:lipschitz} and the mean value theorem, there exist $\delta_i\in\left[0,l_i\right]$, $i\in\mathbb{N}_{\left[1,n_\phi\right]}$ such that $\phi_i\left(z+q\right)-\phi_i\left(q\right)=\delta_{i}z$ for all $z,q\in\mathbb{R}$. Hence, for $z=\bm{H}(\hat{p})\tilde{x}-K(\hat{p})w$ and $q=Hx$ we have, for any $\tilde{x},x\in\mathbb{R}^{n_x}$ and $w\in\mathbb{R}^{n_w}$, that $\phi(z+q)-\phi(q)=\delta z$ with $\delta = \mathrm{diag}(\delta_1,\hdots,\delta_{n_\phi})$. Let $\xi = (\tilde{x},\delta z,v,w)$ and
	\begin{equation}
		\tilde{x}^+ = \pazocal{Q}(\hat{p},p)\xi + \psi(\tilde{p},p,x,u),
		\label{eq:err-diff-eq}
	\end{equation}
	 with $\pazocal{Q}(\hat{p},p)=\begin{bmatrix}\begin{smallmatrix}\bm{A}(\hat{p})&G(\hat{p}) &-B(p) & -L(\hat{p})\end{smallmatrix}\end{bmatrix}$ and $\psi(\tilde{p},p,x,u)\coloneqq (G(\tilde{p}+p)-G(p))\phi(Hx) + (A(\tilde{p}+p)-A(p))x + (B(\tilde{p}+p)-B(p))u$. Let $V(\tilde{x}) = \tilde{x}^\top P\tilde{x}$, with $P=P^\top\succ 0$, $a_1 = \lambda_{\mathrm{min}}\left(P\right)$ and $a_2=\lambda_{\mathrm{max}}\left(P\right)$, then condition~\eqref{eq:sandwich-bounds} holds with $\alpha_i(r) = a_ir^2$, $i=1,2$. Substitute~\eqref{eq:err-diff-eq} in $V(\tilde{x})$ to find
	\small\begin{align}
			V\left(\tilde{x}^+\right) &= \xi^\top\pazocal{Q}^{\top}(\hat{p},p)P\pazocal{Q}(\hat{p},p)\xi\nonumber\\
			&\quad +2\psi^\top P\left(\tilde{x}^+-\psi\right) + \psi^\top P\psi,\\
			&\leqslant 2\xi^\top\pazocal{Q}^\top(\hat{p},p)P\pazocal{Q}(\hat{p},p)\xi + 2\psi^\top P\psi\label{eq:lyap-desc}
	\end{align}\normalsize
	where the arguments of $\psi$ have been omitted. Since $P\succ 0$, the LMI~\eqref{eq:design-inequality} implies, using Schur complement,
\begin{equation}
	\pazocal{Q}^\top(\hat{p},p)P\pazocal{Q}(\hat{p},p)\preccurlyeq \begin{bmatrix}\begin{smallmatrix}
		\frac{1}{2}P-\frac{\kappa_{\tilde{x}}}{2}I & \star & \star & \star\\
		-\frac{1}{2}M\bm{H}(\hat{p}) & M\Lambda^{-1} & \star & \star\\
		0 & 0 & \frac{\kappa_v}{2}I & \star\\
		0 & \frac{1}{2}K^\top(\hat{p})M & 0 & \frac{\kappa_w}{2}I
	\end{smallmatrix}\end{bmatrix},\nonumber
\end{equation}
which is substituted in~\eqref{eq:lyap-desc} to obtain
\begin{align}
	&V(\tilde{x}^+) \leqslant V(\tilde{x}) - \kappa_{x}\left\|\tilde{x}\right\|^2_2 + \kappa_v\left\|v\right\|^2_2 + \kappa_w\left\|w\right\|_2^2\nonumber\\
	&\quad + 2z^\top\delta M\left(\Lambda^{-1}\delta z - \bm{H}(\hat{p})\tilde{x}\right) + 2\psi^\top P\psi,
\end{align} 
It can be shown that $z^\top\delta M\left(\Lambda^{-1}\delta z-\bm{H}(\hat{p})\tilde{x}\right)= z^\top\delta M\left(\Lambda^{-1}\delta - I\right)z\leqslant 0$ by examining the entries of $\Lambda^{-1}\delta$. Thus, condition~\eqref{eq:diffV} holds with $\alpha_3(r)\coloneqq \kappa_{\tilde{x}}r^2$ and $\sigma(\tilde{p},v,w,x,u)\coloneqq \kappa_v\left\|v\right\|^2_2 + \kappa_w\left\|w\right\|_2^2 + 2\sup_{p\in\mathbb{P}}\psi^\top(\tilde{p},p,x,u)P(\tilde{p}+p)\psi(\tilde{p},p,x,u),$ which is continuous, since $A$, $G$, $B$ and $\phi$ are continuous and due to Assumptions~\ref{ass:bounded-sequences}-\ref{ass:bounded-state}. Moreover, $\sigma$ is non-negative and $\sigma(0,0,0,x,u)=0$ for all $x\in\mathbb{R}^{n_x}$, $u\in\mathbb{R}^{n_u}$. Since the state error for system~\eqref{eq:illustrative-example} with observer~\eqref{eq:circle-criterion-observer} satisfies~\eqref{eq:err-diff-eq} with $\tilde{x}^+=\tilde{x}_{k+1}$, $\xi = \left(\tilde{x}_k,\bm{\delta}_kz_k,v_k,w_k\right)$ where $\bm{\delta}_k = \mathrm{diag}(\delta_{1,k},\hdots,\delta_{n_\phi,k})$ with $\delta_{i,k}$ such that $\phi_i(z_k+Hx_k)-\phi_i(Hx_k) = \delta_{i,k}z_k$, $i\in\mathbb{N}_{\left[1,n_{\phi}\right]}$, for $k\in\mathbb{N}$, Assumption~\ref{ass:iss-lyap} is satisfied.
\end{proof}

\bibliographystyle{IEEEtran}
\bibliography{../../../../bibtex/phd-bibtex}

\addtolength{\textheight}{-3cm}   
                                  
\end{document}